\newtheorem{theorem}{Theorem}[section]
\newtheorem{remark}{Remark}[section]
\newtheorem{corollary}{Corollary}[section]
\newtheorem{proposition}{Proposition}[section]
\numberwithin{equation}{section}
\begin{document}
	
\title{A Note on Kantorovich and Ando Inequalities}
\author{Mohammad Sababheh, Hamid Reza Moradi, Ibrahim Halil G\"um\"u\c s, and Shigeru Furuichi}
\subjclass[2010]{Primary 47A63, 52A41, Secondary 47A30, 47A60, 52A40.}
\keywords{Kantorovich inequality, Ando's inequality, operator Minkowski inequality.}

\begin{abstract}
The main goal of this exposition is to present further analysis of the Kantorovich and Ando operator inequalities. In particular, a new proof of Ando's inequality is given, a new non-trivial refinement of Kantorovich inequality is shown, and some equivalent forms of Kantorovich inequality are presented with a Minkowski-type application.

\end{abstract}
\maketitle
%------------------------------------------------------------------------------------%
\pagestyle{myheadings}
\markboth{\centerline {A Note on Kantorovich and Ando Inequalities}}
{\centerline {M. Sababheh, H. R. Moradi, I. H. G\"um\"u\c s \& S. Furuichi}}
\bigskip
\bigskip
%------------------------------------------------------------------------------------%
%------------------------------------------------------------------------------------%
\section{Introduction and preliminaries}
Let $\mathcal{B}(\mathcal{H})$ be the algebra of bounded linear operators on a complex Hilbert space $\mathcal{H}$, with identity $I_{\mathcal{H}}$ (or $I$ if no confusion arises). For two Hilbert spaces $\mathcal{H}$ and $\mathcal{K}$, a linear mapping $\Phi:\mathcal{B}(\mathcal{H})\to \mathcal{B}(\mathcal{K})$ is said to be positive if it preserves positive operators. That is, if $\Phi(A)\geq 0$ whenever $A\geq 0,$ where an operator $A\in \mathcal{B}(\mathcal{H})$ is said to be positive, denoted $A\geq 0,$ if $\left<Ax,x\right>\geq 0$ for all $x\in \mathcal{H}.$ In addition, if the positive linear mapping $\Phi$ satisfies $\Phi(I_{\mathcal{H}})=I_{\mathcal{K}}$, it is said to be a unital  (or normalized) positive linear mapping. 

Operator convex and operator concave functions have played a major role in understanding the geometry of $\mathcal{B}(\mathcal{H})$. In this context, a function $f:J\to \mathbb{R}$ is said to operator convex if $f((1-t)A+tB)\leq (1-t)f(A)+t f(B)$ for all $0\leq t\leq 1$ and self adjoint operators $A,B$ with spectra in the interval $J$. Operator concave functions are defined similarly. On the other hand, operator monotone functions have a strong relation with operator concave functions. Recall that $f:J\to \mathbb{R}$ is said to be operator monotone if $f(A)\leq f(B)$ for all self-adjoint operators $A, B$ with spectra in the interval $J$, such that $A\leq B.$ Operator monotone decreasing functions are defined similarly.

Unlike scalar monotony and convexity, operator monotony and convexity are strongly related, as stated in the next proposition, which can be found in \cite[Theorem 2.4]{Uchiyama}, and \cite[Theorem 2.1, Theorem 3.1, Theorem 2.3 and Theorem 3.7]{ando1}. 
\begin{proposition}\label{oper_intro_prop}
Let $f:(0,\infty)\to [0,\infty)$ be continuous. Then 
\begin{enumerate}
\item $f$ is operator monotone decreasing if and only if $f$ is operator convex and $f(\infty)<\infty$.
\item $f$ is operator monotone increasing if and only if $f$ is operator concave.
\end{enumerate}
\end{proposition}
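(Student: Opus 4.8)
The plan is to derive both equivalences from the L\"owner and Kraus integral representations of operator monotone, respectively operator convex, functions on $(0,\infty)$; once those representations are available, each implication reduces to checking one family of elementary rational building blocks together with a short argument on the signs of the coefficients and measures that occur.

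I would first handle the two ``forward'' implications, which require no sign hypothesis. If $f$ is operator monotone increasing on $(0,\infty)$, L\"owner's theorem gives $f(t)=\alpha+\beta t+\int_{0}^{\infty}\frac{\lambda t}{\lambda+t}\,d\mu(\lambda)$ with $\beta\ge 0$ and $\mu$ a positive measure. Because $t\mapsto(\lambda+t)^{-1}$ is operator convex on $(0,\infty)$ for every $\lambda\ge 0$ (an elementary fact), each kernel $\frac{\lambda t}{\lambda+t}=\lambda-\lambda^{2}(\lambda+t)^{-1}$ is operator concave; adding the affine part and integrating against the positive measure preserves operator concavity, so $f$ is operator concave. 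This settles the forward implication in (2). For the forward implication in (1), observe that if $f$ is operator monotone decreasing then $-f$ is operator monotone increasing, hence operator concave by what has just been shown, so $f$ is operator convex; and since $f$ is decreasing and nonnegative, $f(\infty)=\lim_{t\to\infty}f(t)=\inf_{t>0}f(t)$ lies in $[0,f(1)]$, so the extra condition $f(\infty)<\infty$ holds automatically under the stated hypotheses.

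The real content is the converse of each part, where the extra hypothesis does the work. For the converse of (1), assume $f$ is operator convex on $(0,\infty)$ with $f(\infty)<\infty$. The Kraus representation expresses $f$ as an affine function plus $\gamma(t-1)^{2}$ plus $\int_{0}^{\infty}\frac{(t-1)^{2}(1+\lambda)}{t+\lambda}\,d\mu(\lambda)$ with $\gamma\ge 0$ and $\mu\ge 0$; writing $\frac{(t-1)^{2}(1+\lambda)}{t+\lambda}$ as a linear polynomial in $t$ plus $(1+\lambda)^{3}(t+\lambda)^{-1}$, one sees that the constraint $f(\infty)<\infty$ forces $\gamma=0$ and forces the surviving linear parts to cancel, leaving $f(t)=C+\int_{0}^{\infty}\frac{d\nu(\lambda)}{t+\lambda}$ for a finite constant $C$ and a positive measure $\nu$. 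Since each $t\mapsto(t+\lambda)^{-1}$ is operator monotone decreasing, $f$ is operator monotone decreasing. For the converse of (2), assume $f$ is operator concave and nonnegative on $(0,\infty)$; then $g=-f$ is operator convex with $g\le 0$, and repeating the sign analysis on the Kraus representation of $g$ now rules out the $\gamma(t-1)^{2}$ term (via $t\to\infty$), rules out a possible $t^{-1}$ term (via $t\to 0^{+}$, using $f\ge 0$), and pins the remaining coefficients so that after rearrangement $f(t)=\alpha+\beta t+\int_{0}^{\infty}\frac{\lambda t}{\lambda+t}\,d\mu(\lambda)$ with $\alpha,\beta\ge 0$ and $\mu\ge 0$. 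As the constant $\alpha$, the term $\beta t$, and every kernel $\frac{\lambda t}{\lambda+t}$ are operator monotone increasing, $f$ is operator monotone increasing.

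The main obstacle is exactly this bookkeeping: obtaining the L\"owner--Kraus representation and then using the single extra hypothesis available in each part ($f(\infty)<\infty$ in (1), $f\ge 0$ in (2)) to annihilate the offending terms and fix the signs of the surviving coefficients and measures; once the representation is in reduced form, operator monotonicity follows term by term. Since the integral representations themselves are classical --- L\"owner's theorem for operator monotone functions, and Kraus' theorem for operator convex functions on a half-line --- one may alternatively take them as given and spend the argument entirely on the sign analysis and the term-by-term monotonicity checks.
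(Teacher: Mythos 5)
The paper itself offers no proof of this proposition; it is imported verbatim from \cite[Theorem 2.4]{Uchiyama} and \cite[Theorems 2.1, 2.3, 3.1, 3.7]{ando1}, so there is no in-text argument to compare yours against. Your route --- L\"owner's representation for the operator monotone directions, a Kraus-type representation for operator convex functions on $(0,\infty)$, and then a sign/decay analysis that kills the quadratic and linear parts using $f(\infty)<\infty$ in (1) and $f\ge 0$ in (2) --- is essentially the route taken in the cited sources themselves: Ando and Hiai reduce part (1) to the representation $f(t)=C+\int_{[0,\infty)}\frac{1+\lambda}{t+\lambda}\,d\nu(\lambda)$ exactly as you do, and the forward implications together with the term-by-term monotonicity and convexity checks are standard and correct. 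So as a reconstruction of the cited proofs your plan is sound.

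One step you wave off as bookkeeping does need repair. In the converse of (1) you split each kernel $\frac{(t-1)^2(1+\lambda)}{t+\lambda}$ into a linear polynomial in $t$ plus $(1+\lambda)^3(t+\lambda)^{-1}$ and integrate the two pieces separately, concluding that $f(\infty)<\infty$ annihilates the integrated linear part. But the representing measure $\mu$ is only constrained to make the \emph{original} kernel integrable (which forces $\mu$ to be finite, since the kernel tends to $(t-1)^2$ as $\lambda\to\infty$); it does not by itself give $\int_0^\infty(1+\lambda)\,d\mu(\lambda)<\infty$, so the termwise separation can produce $\infty-\infty$ (try $d\mu(\lambda)=\lambda^{-2}\,d\lambda$ on $[1,\infty)$). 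The conclusion survives: first get $\gamma=0$ from $f(t)/t^2\to 0$, then apply Fatou's lemma to $f(t)/t$ to deduce $\int_0^\infty(1+\lambda)\,d\mu(\lambda)<\infty$, after which the split is legitimate and the linear coefficients must cancel. The same caveat, with the same Fatou fix, applies to the sign analysis in your converse of (2). With that one point tightened, your sketch is a genuine self-contained proof of the quoted facts.
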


The Choi-Davis inequality states that \cite{choi,davis}
\begin{equation}\label{choi_davis_ineq}
f(\Phi(A))\leq \Phi(f(A)),
\end{equation}
for all self adjoint operators $A\in\mathcal{B}(\mathcal{H})$ with spectra in the interval $J$, all operator convex functions $f:J\to\mathbb{R}$ and all unital positive linear mappings $\Phi:\mathcal{B}(\mathcal{H})\to \mathcal{B}(\mathcal{K}).$

In particular, if $A>0$, then 
\begin{equation}\label{inverse_choi_davis}
\Phi(A)^{-1}\leq \Phi(A^{-1}),
\end{equation}
since $f(t)=t^{-1}$ is operator convex on $(0,\infty).$ The inequality \eqref{inverse_choi_davis} can be reversed under the additional condition that $0<mI\leq A\leq MI,$ for some scalars $m,M$ as follows \cite{grueb,mond_houston, 4}

\begin{equation}\label{9}
\Phi \left( {{A}^{-1}} \right)\le \frac{{{\left( M+m \right)}^{2}}}{4Mm}{{\Phi }}\left( A \right)^{-1}.
\end{equation}
Among many other equivalences, we shall prove that \eqref{9} is equivalent to 
\begin{equation}\label{90}
\Phi \left( {{A}^{2}} \right)\le \frac{{{\left( M+m \right)}^{2}}}{4Mm}{{\Phi }}\left( A \right)^{2}.
\end{equation}
Recalling that the geometric mean of two positive invertible operators $A, B$ is defined by
$$A\sharp B=A^{1/2}\left(A^{-1/2}BA^{-1/2}\right)^{1/2}A^{1/2},$$  it is shown in \cite{mond_houston} that \eqref{9} implies
\begin{equation}\label{10}
\Phi \left( {{A}^{-1}} \right)\sharp\Phi \left( A \right)\le \frac{M+m}{2\sqrt{Mm}}.
\end{equation}
The inequality \eqref{9} is usually referred to as the Kantorovich inequality.

In \cite[Lemma 2.1]{3} it is proved that if $f:J\to \mathbb{R}$ is a convex function and $A\in \mathcal{B}\left( \mathcal{H} \right)$ is a self-adjoint operator with  spectrum  in the interval $J$, then for any unital positive linear map $\Phi :\mathcal{B}\left( \mathcal{H} \right)\to \mathcal{B}\left( \mathcal{K} \right)$,
\[f\left( \left\langle \Phi \left( A \right)x,x \right\rangle  \right)\le \left\langle \Phi \left( f\left( A \right) \right)x,x \right\rangle \quad\left( x\in \mathcal{K},\left\| x \right\|=1 \right).\]
As a corollary (see \cite[Theorem 1.4]{5}), we see that if $A$ is a positive operator, then
\begin{equation}\label{1}
{{\left\langle Ax,x \right\rangle }^{r}}\le \left\langle {{A}^{r}}x,x \right\rangle \quad\left( r\ge 1 \right).
\end{equation}
If the operator is positive and invertible, \eqref{1} is also true for $r<0$.

A strongly related inequality that we will discuss is the celebrated Ando's inequality stating \cite{ando_mathias}
\begin{equation}\label{ando_ineq_intro}
\Phi \left( A\sharp B \right)\le \Phi \left( A \right)\sharp\Phi \left( B \right),
\end{equation}
where $A$ and $B$ are two positive operators and $\Phi$ is a unital positive linear map. In addition to the aforementioned references that have been cited, we refer the reader to \cite{li,2micic,01mic,7,mond,hamid_lama,sab_jfs} for further and related discussions.

In this article, we first present a new proof of \eqref{ando_ineq_intro}. This will help better understand this celebrated inequality. Then, we use Kantorovich-type inequalities to provide the reverse of Ando's inequality. Once this is done,  we present a non-trivial refinement of \eqref{10}. Further discussion of the Kantorovich inequality is presented via several equivalent forms. Some applications are given, including a submultiplicative inequality for unital positive linear mappings and an operator Minkowski-type inequality. 

\section{Ando's inequality}
In this section, we first present a new proof of Ando's inequality; then, we prove a reversed version of Ando's inequality. 

Recall that for positive invertible  operators $A$ and $B,$ the Riccati equation $X{{A}^{-1}}X=B$ has the geometric mean $A\sharp B$ as a unique positive solution \cite[Theorem 2.2]{nak}.

 Let 	$X=A\sharp B$ and let $\Phi$ be a unital positive linear map. It follows from Choi's inequality \cite[Proposition 4.3]{choi_assorted},
	$$\Phi \left( X \right)\Phi {{\left( A \right)}^{-1}}\Phi \left( X \right)\le \Phi \left( X{{A}^{-1}}X \right)=\Phi \left( B \right).$$ 
Therefore,	 
	$${{\left( \Phi {{\left( A \right)}^{-\frac{1}{2}}}\Phi \left( X \right)\Phi {{\left( A \right)}^{-\frac{1}{2}}} \right)}^{2}}\le \Phi {{\left( A \right)}^{-\frac{1}{2}}}\Phi \left( B \right)\Phi {{\left( A \right)}^{-\frac{1}{2}}}.$$ 
Since $f\left( t \right)={{t}^{\frac{1}{2}}}$ is  operator monotone \cite[Corollary 1.16]{5}, we get
	$$\Phi {{\left( A \right)}^{-\frac{1}{2}}}\Phi \left( X \right)\Phi {{\left( A \right)}^{-\frac{1}{2}}}\le {{\left( \Phi {{\left( A \right)}^{-\frac{1}{2}}}\Phi \left( B \right)\Phi {{\left( A \right)}^{-\frac{1}{2}}} \right)}^{\frac{1}{2}}}.$$ 
Consequently,
$$\Phi \left( X \right)\le \Phi {{\left( A \right)}^{\frac{1}{2}}}{{\left( \Phi {{\left( A \right)}^{-\frac{1}{2}}}\Phi \left( B \right)\Phi {{\left( A \right)}^{-\frac{1}{2}}} \right)}^{\frac{1}{2}}}\Phi {{\left( A \right)}^{\frac{1}{2}}},$$
which is equivalent to 
	$$\Phi \left( A\sharp B \right)\le \Phi \left( A \right)\sharp\Phi \left( B \right).$$
This proves Ando's inequality.

Next, we utilize \eqref{90} to prove a reversed version of Ando's inequality under the sandwich condition. We remark that this reversed version has been shown in \cite[Theorem 4]{lee} using a completely different method. In this article, we utilize the Kantorovich-type inequalities to offer this version. This helps understand the relation between Ando-type and Kantorovich-type inequalities.
\begin{proposition}\label{prop_nerev}
Let $\Phi :\mathcal{B}\left( \mathcal{H} \right)\to \mathcal{B}\left( \mathcal{K} \right)$ be a unital positive linear mapping and let $A,B\in \mathcal{B}(\mathcal{H})$ be positive operators such that $m^2A\leq B\leq M^2A,$ for some positive scalars $m,M.$ Then
$$\Phi(A)\sharp \Phi(B) \leq \frac{M+m}{2\sqrt{mM}}\Phi(A\sharp B).$$ 
\end{proposition}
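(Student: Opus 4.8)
The plan is to deduce this reverse of Ando's inequality from the Kantorovich-type inequality \eqref{90} by replacing $\Phi$ with a suitably normalized companion map. First I would set $S=A^{-1/2}BA^{-1/2}$ and observe that the sandwich hypothesis $m^2A\le B\le M^2A$ becomes, after conjugation by $A^{-1/2}$, exactly $m^2I\le S\le M^2I$; operator monotonicity of $t\mapsto t^{1/2}$ then gives $mI\le S^{1/2}\le MI$. Two elementary identities will be used repeatedly: $S=A^{-1/2}BA^{-1/2}$ and, straight from the definition of the geometric mean, $A\sharp B=A^{1/2}S^{1/2}A^{1/2}$, that is, $S^{1/2}=A^{-1/2}(A\sharp B)A^{-1/2}$.

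Next I would introduce the map $\Psi:\mathcal{B}(\mathcal{H})\to\mathcal{B}(\mathcal{K})$ defined by
\[\Psi(Y)=\Phi(A)^{-1/2}\,\Phi\!\left(A^{1/2}YA^{1/2}\right)\Phi(A)^{-1/2}.\]
The crucial observation is that $\Psi$ is again a \emph{unital positive linear} map: it is positive and linear because $Y\mapsto A^{1/2}YA^{1/2}$, then $\Phi$, then $Z\mapsto\Phi(A)^{-1/2}Z\Phi(A)^{-1/2}$ are each positive and linear, and it is unital since $\Psi(I)=\Phi(A)^{-1/2}\Phi(A)\Phi(A)^{-1/2}=I$. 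Feeding $Y=S$ and $Y=S^{1/2}$ into $\Psi$ and using the two identities above yields
\[\Psi(S)=\Phi(A)^{-1/2}\Phi(B)\Phi(A)^{-1/2},\qquad \Psi\!\left(S^{1/2}\right)=\Phi(A)^{-1/2}\Phi(A\sharp B)\Phi(A)^{-1/2}.\]

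Then I would apply \eqref{90} to $\Psi$ with the operator $C=S^{1/2}$, which satisfies $0<mI\le C\le MI$. Since $\frac{(M+m)^2}{4Mm}=\left(\frac{M+m}{2\sqrt{mM}}\right)^2$ and $\Psi(C)\ge mI>0$, inequality \eqref{90} reads
\[\Psi\!\left(C^{2}\right)\le\left(\frac{M+m}{2\sqrt{mM}}\,\Psi(C)\right)^{2}.\]
Taking square roots (operator monotonicity of $t\mapsto t^{1/2}$ once more) and substituting $\Psi(C^2)=\Psi(S)$ and $\Psi(C)=\Psi(S^{1/2})$ gives
\[\left(\Phi(A)^{-1/2}\Phi(B)\Phi(A)^{-1/2}\right)^{1/2}\le\frac{M+m}{2\sqrt{mM}}\,\Phi(A)^{-1/2}\Phi(A\sharp B)\Phi(A)^{-1/2};\]
conjugating both sides by $\Phi(A)^{1/2}$ turns the left-hand side into $\Phi(A)\sharp\Phi(B)$ by the definition of the geometric mean, which is exactly the asserted inequality.

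The only genuinely delicate point is the verification that $\Psi$ is unital and positive, since that is precisely what licenses the use of \eqref{90}; everything else is routine manipulation whose only analytic ingredients are operator monotonicity of the square root and \eqref{90} itself. Throughout, one assumes, as the very presence of $\sharp$ already requires, that $A$, $B$ and their images $\Phi(A)$, $\Phi(B)$ are positive and invertible, so that the fractional and negative powers appearing above are well defined.
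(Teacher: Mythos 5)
Your proof is correct and follows essentially the same route as the paper's: the same auxiliary unital positive map $\Psi(Y)=\Phi(A)^{-1/2}\Phi(A^{1/2}YA^{1/2})\Phi(A)^{-1/2}$, the same choice $C=(A^{-1/2}BA^{-1/2})^{1/2}$ with $mI\le C\le MI$, the same application of the Kantorovich-type inequality \eqref{90} followed by operator monotonicity of the square root. Your write-up is merely more explicit about why $\Psi$ is unital positive and about the identities $\Psi(S)=\Phi(A)^{-1/2}\Phi(B)\Phi(A)^{-1/2}$ and $\Psi(S^{1/2})=\Phi(A)^{-1/2}\Phi(A\sharp B)\Phi(A)^{-1/2}$, which the paper leaves implicit.
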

\begin{proof}
For the given $\Phi$ and $A$, define the positive unital linear mapping $\Psi$ by
 $\Psi \left( X \right)\equiv \Phi\left( A \right)^{-\frac{1}{2}}\Phi \left( {{A}^{\frac{1}{2}}}X{{A}^{\frac{1}{2}}} \right)\Phi\left( A \right)^{-\frac{1}{2}}$ and let $C=\left( A^{-\frac{1}{2}}BA^{-\frac{1}{2}}\right)^{\frac{1}{2}}.$ Since $m^2A\leq B\leq M^2A,$ it follows that $mI\leq C\leq MI.$ Therefore, we may apply the inequality
$\Psi(C^2)\leq \left(\frac{M+m}{2\sqrt{Mm}}\right)^2\Psi(C)^2$ to obtain
$$\Phi(A)^{-\frac{1}{2}}\Phi(B)\Phi(A)^{-\frac{1}{2}}\leq \left(\frac{M+m}{2\sqrt{Mm}}\right)^2 \left(\Phi(A)^{-\frac{1}{2}}\Phi(A\sharp B)\Phi(A)^{-\frac{1}{2}}\right)^2.$$ Since the function $f(t)=t^{\frac{1}{2}}$ is operator monotone, it follows that
$$\left(\Phi(A)^{-\frac{1}{2}}\Phi(B)\Phi(A)^{-\frac{1}{2}}\right)^{\frac{1}{2}}\leq \frac{M+m}{2\sqrt{Mm}} \Phi(A)^{-\frac{1}{2}}\Phi(A\sharp B)\Phi(A)^{-\frac{1}{2}},$$ which implies the desired inequality.
\end{proof}

In fact, Ando's inequality follows from a more general result that
\begin{equation}\label{ando_gen}
\Phi(A\sigma_f B)\leq \Phi(A)\sigma_f \Phi(B),
\end{equation}
where $A,B$ are positive and $\sigma_f$ is an operator mean with representing function $f$.  In the next result, we show that if $f$ is operator convex, then Ando's inequality is reversed, then we show that this reversed Ando inequality implies \eqref{inverse_choi_davis}.  We point out here that Theorem \ref{thm_rev_and} does not follow from \eqref{ando_gen}, as $f$ is a positive function. So, multiplying \eqref{ando_gen} with -1 does not imply Theorem  \ref{thm_rev_and}.
\begin{theorem}\label{thm_rev_and}
Let $A,B\in\mathcal{B}(\mathcal{H})$ be positive invertible, $f:(0,\infty)\to (0,\infty)$ be a given operator convex function and $\Phi:\mathcal{B}(\mathcal{H})\to \mathcal{B}(\mathcal{K})$ be a positive unital linear mapping. Then
$$\Phi(A\sigma_f B)\geq \Phi(A)\sigma_f \Phi(B),$$ where the connection $\sigma_f$ is defined by
$$A\sigma_fB=A^{\frac{1}{2}}f\left(A^{-\frac{1}{2}}BA^{-\frac{1}{2}}\right)A^{\frac{1}{2}}.$$
\end{theorem}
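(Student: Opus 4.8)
The plan is to reuse the auxiliary-map device from the proof of the reversed Ando inequality above, but to pass it through the Choi--Davis inequality \eqref{choi_davis_ineq} rather than through Choi's inequality. Since $A$ is positive and invertible we have $A\geq\varepsilon I$ for some $\varepsilon>0$, hence $\Phi(A)\geq\varepsilon\Phi(I)=\varepsilon I>0$, so $\Phi(A)$ is invertible and the map
\[
\Psi(X)\equiv\Phi(A)^{-\frac{1}{2}}\,\Phi\!\left(A^{\frac{1}{2}}XA^{\frac{1}{2}}\right)\Phi(A)^{-\frac{1}{2}}
\]
is well defined. It is linear, positive (a congruence of the positive operator $\Phi(A^{1/2}XA^{1/2})$ when $X\geq 0$), and unital, since $\Psi(I)=\Phi(A)^{-1/2}\Phi(A)\Phi(A)^{-1/2}=I$.

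Next, put $T\equiv A^{-\frac{1}{2}}BA^{-\frac{1}{2}}$, which is positive and invertible, so its spectrum lies in $(0,\infty)$ and $f(T)$ makes sense. Two identities hold by construction:
\[
\Psi(T)=\Phi(A)^{-\frac{1}{2}}\Phi(B)\Phi(A)^{-\frac{1}{2}},\qquad
\Psi\!\left(f(T)\right)=\Phi(A)^{-\frac{1}{2}}\,\Phi\!\left(A\sigma_f B\right)\Phi(A)^{-\frac{1}{2}},
\]
the second being nothing but the definition of the connection $\sigma_f$ read backwards. Applying the Choi--Davis inequality \eqref{choi_davis_ineq} to the unital positive map $\Psi$, the operator convex function $f$ and the operator $T$, we get $f(\Psi(T))\leq\Psi(f(T))$, that is,
\[
f\!\left(\Phi(A)^{-\frac{1}{2}}\Phi(B)\Phi(A)^{-\frac{1}{2}}\right)\leq\Phi(A)^{-\frac{1}{2}}\,\Phi\!\left(A\sigma_f B\right)\Phi(A)^{-\frac{1}{2}}.
\]
Conjugating both sides by $\Phi(A)^{1/2}$, an operation that preserves the operator order, the right-hand side collapses to $\Phi(A\sigma_f B)$ while the left-hand side becomes $\Phi(A)^{1/2}f\!\left(\Phi(A)^{-1/2}\Phi(B)\Phi(A)^{-1/2}\right)\Phi(A)^{1/2}=\Phi(A)\sigma_f\Phi(B)$, which is the desired reversed Ando inequality.

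I do not expect a genuine obstacle here; the points that call for a line of care are the invertibility of $\Phi(A)$ together with the inclusion of the spectrum of $T$ in $(0,\infty)$ (so that $f(T)$ is defined), and the verification that $\Psi$ is truly unital and positive, which is what licenses the appeal to \eqref{choi_davis_ineq}. I would then close with the promised consequence: taking the operator convex function $f(t)=t^{-1}$ and $A=I$ gives $I\sigma_f B=B^{-1}$ and $\Phi(I)\sigma_f\Phi(B)=\Phi(B)^{-1}$, so the theorem specializes to $\Phi(B^{-1})\geq\Phi(B)^{-1}$, recovering \eqref{inverse_choi_davis}.
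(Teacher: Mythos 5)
Your proof is correct and follows essentially the same route as the paper's: the same auxiliary unital positive map $\Psi(X)=\Phi(A)^{-1/2}\Phi(A^{1/2}XA^{1/2})\Phi(A)^{-1/2}$, the same substitution $X=A^{-1/2}BA^{-1/2}$, and the same appeal to the Choi--Davis inequality \eqref{choi_davis_ineq} followed by conjugation with $\Phi(A)^{1/2}$. The extra care you take with the invertibility of $\Phi(A)$ and the unitality of $\Psi$ is a welcome addition but does not change the argument.
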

\begin{proof}
For the given parameters, define
$$\Psi \left( X \right)\equiv{{\Phi }}\left( A \right)^{-\frac{1}{2}}\Phi \left( {{A}^{\frac{1}{2}}}X{{A}^{\frac{1}{2}}} \right){{\Phi }}\left( A \right)^{-\frac{1}{2}};\;X\in\mathcal{B}(\mathcal{H}).$$ Then $\Psi$ is positive unital. Since $f$ is operator convex, \eqref{choi_davis_ineq} implies $f\left(\Psi(X)\right)\leq \Psi(f(X)),$ for any self adjoint $X\in\mathcal{B}(\mathcal{H}).$ Let $X=A^{-\frac{1}{2}}BA^{-\frac{1}{2}}$ and apply this latter inequality to get
$$f\left(\Phi(A)^{-\frac{1}{2}}\Phi(B)\Phi(A)^{-\frac{1}{2}}\right)\leq {{\Phi }}\left( A \right)^{-\frac{1}{2}}\Phi \left( {{A}^{\frac{1}{2}}}f\left( A^{-\frac{1}{2}}BA^{-\frac{1}{2}}  \right){{A}^{\frac{1}{2}}} \right){{\Phi }}\left( A \right)^{-\frac{1}{2}},$$ which is equivalent to
$$ \Phi(A)\sigma_f \Phi(B)\leq\Phi(A\sigma_f B),$$ as desired.
\end{proof}
Interestingly, Theorem \ref{thm_rev_and} implies \eqref{inverse_choi_davis}, as follows: In Theorem \ref{thm_rev_and}, let $f(t)=t^2$ and $B=I.$ Since  $f$ is operator convex, we may apply the theorem. Direct computations show that $A\sigma_fB=A^{-1}.$ Consequently,
$$\Phi(A^{-1})= \Phi(A\sigma_fB)\geq \Phi(A)\sigma_f \Phi(B)=\Phi(A)^{-1};$$ as desired.

\begin{remark}
We remark that in Theorem \ref{thm_rev_and}, if we let $A=I$, we get $$f(\Phi(B))\leq \Phi(f(B));$$ an inequality that is equivalent to the fact that $f$ is operator convex. This shows that the inequality in Theorem \ref{thm_rev_and} is equivalent to the fact that $f$ is operator convex.
\end{remark}

\section{Further analysis of the Kantorovich inequality}
This section is devoted to the study of Kantorovich inequality \eqref{9}, where we begin by giving multiple equivalent statements. It should be remarked that these individual statements are well known, but their equivalence is the aim of  Theorem \ref{2}.

We will use the following observation to prove the next result. Let $\Phi$ be a given unital positive linear map and let $\Phi'$ be another unital positive linear map. Then \eqref{ando_ineq_intro} implies 
\[\Phi '\left( \Phi \left( A\sharp B \right) \right)\le \Phi '\left( \Phi \left( A \right)\sharp\Phi \left( B \right) \right)\le \Phi '\left( \Phi \left( A \right) \right)\sharp\Phi '\left( \Phi \left( B \right) \right).\]
Defining $\Phi '\left( T \right)\equiv \left\langle Tx,x \right\rangle$, with $x\in \mathcal{H}$, $\left\| x \right\|=1$, we get
\begin{equation}\label{17}
\begin{aligned}
   \left\langle \Phi \left( A\sharp B \right)x,x \right\rangle &\le \left\langle \Phi \left( A \right)\sharp\Phi \left( B \right)x,x \right\rangle  \\ 
 & \le \left\langle \Phi \left( A \right)x,x \right\rangle \sharp\left\langle \Phi \left( B \right)x,x \right\rangle  \\ 
 & =\sqrt{\left\langle \Phi \left( A \right)x,x \right\rangle \left\langle \Phi \left( B \right)x,x \right\rangle }.  
\end{aligned}
\end{equation}

As we mentioned earlier, these statements are all known. We have already seen (i), (iii) and (iv) (from Theorem \ref{2}) in \eqref{9}, \eqref{10} and \eqref{90} respectively. For (ii) in Theorem \ref{2}, it can be easily deduced from Proposition \ref{prop_nerev} on letting $B=A^{-1}$ and defining the new mapping $\Phi'(X)=\left<\Phi(X)x,x\right>.$
\begin{theorem}\label{2}
Let $A\in \mathcal{B}\left( \mathcal{H} \right)$ satisfying $mI\le A\le MI$ for some scalars $0<m<M$. Then the following assertions are equivalent. 
\begin{itemize}
	\item[(i)] $\Phi \left( {{A}^{-1}} \right)\le {{\left( \frac{M+m}{2\sqrt{Mm}} \right)}^{2}}{{\Phi }}\left( A \right)^{-1}$ for any unital positive linear mapping $\Phi :\mathcal{B}\left( \mathcal{H} \right)\to \mathcal{B}\left( \mathcal{K} \right)$.
	\item[(ii)] $\left\langle \Phi \left( {{A}^{-1}} \right)x,x \right\rangle \le {{\left( \frac{M+m}{2\sqrt{Mm}} \right)}^{2}}{{\left\langle \Phi \left( A \right)x,x \right\rangle }^{-1}}$ for any unit vector $x \in \mathcal{K}$ and any positive unital linear mapping $\Phi :\mathcal{B}\left( \mathcal{H} \right)\to \mathcal{B}\left( \mathcal{K} \right)$.
	\item[(iii)] $\Phi \left( {{A}^{-1}} \right)\sharp\Phi \left( A \right)\le \frac{M+m}{2\sqrt{Mm}}I$ for any unital positive linear mapping $\Phi :\mathcal{B}\left( \mathcal{H} \right)\to \mathcal{B}\left( \mathcal{K} \right)$.
	
\item[(iv)] $\Phi \left( {{A}^{2}} \right)\le {{\left( \frac{M+m}{2\sqrt{Mm}} \right)}^{2}}{{\Phi }}\left( A \right)^{2}$ for any unital positive linear mapping $\Phi :\mathcal{B}\left( \mathcal{H} \right)\to \mathcal{B}\left( \mathcal{K} \right)$.

\end{itemize}
\end{theorem}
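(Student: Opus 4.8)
The plan is to prove the chain of equivalences (i)$\Rightarrow$(ii)$\Rightarrow$(iii)$\Rightarrow$(i) and then (i)$\Leftrightarrow$(iv), so that all four statements become mutually equivalent. The implication (i)$\Rightarrow$(ii) is immediate: apply the operator inequality in (i) to a unit vector $x$, using that $\langle \Phi(A)^{-1}x,x\rangle \geq \langle \Phi(A)x,x\rangle^{-1}$ follows from the operator convexity of $t\mapsto t^{-1}$ combined with \eqref{1} (the case $r=-1$), or directly from \eqref{inverse_choi_davis} specialized to the functional $T\mapsto\langle Tx,x\rangle$. For (iii)$\Rightarrow$(i), I would use the observation recorded in \eqref{17}, or rather its operator analogue: since $\Phi(A^{-1})\sharp\Phi(A)\le \frac{M+m}{2\sqrt{Mm}}I$ and the geometric mean is monotone and satisfies $X\sharp X^{-1}=I$-type manipulations, I multiply on both sides by $\Phi(A)^{-1/2}$; more precisely, $\Phi(A^{-1})\sharp\Phi(A)\leq cI$ is equivalent, after congruence by $\Phi(A)^{-1/2}$ and using the transformer equality $T^*(X\sharp Y)T = (T^*XT)\sharp(T^*YT)$ for invertible $T$, to $\bigl(\Phi(A)^{-1/2}\Phi(A^{-1})\Phi(A)^{-1/2}\bigr)\sharp I \leq c\,\Phi(A)^{-1}$, and then squaring (legitimate since the geometric mean with $I$ is an operator monotone function of the first entry) yields (i).

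For (ii)$\Rightarrow$(iii): here the idea is that an operator inequality $X\leq Y$ for $Y$ a scalar multiple of the identity can be detected by testing all unit vectors, but we must be careful because (ii) only gives a scalar inequality, not obviously the operator inequality $\Phi(A^{-1})\sharp\Phi(A)\leq cI$. The route is to recall that $P\sharp Q \leq cI$ is equivalent to $P\leq c^2 Q^{-1}$ (by congruence and squaring as above), so (iii) is equivalent to the operator form (i); thus it suffices to derive (i) from (ii). To get the operator inequality from the scalar one, I would replace $\Phi$ by the composite $\Phi'\circ\Phi$ where $\Phi'$ ranges over unital positive linear maps into $\mathcal{B}(\mathcal{K}')$, or more simply observe that since (ii) holds for \emph{every} unital positive $\Phi$ and every unit vector, I can for a fixed $\Phi$ and fixed unit vector $y\in\mathcal K$ build the map $\Psi(T)=\langle \Phi(T)y,y\rangle$... but this only recovers scalars again. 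The cleaner argument: the operator inequality $\Phi(A^{-1})\le c^2\Phi(A)^{-1}$ is equivalent to $\langle \Phi(A^{-1})x,x\rangle \le c^2\langle\Phi(A)^{-1}x,x\rangle$ for all $x$, and since $t\mapsto t^{-1}$ is operator convex we have $\langle \Phi(A)^{-1}x,x\rangle\geq \langle\Phi(A)x,x\rangle^{-1}$; however that inequality points the wrong way for a direct substitution. I therefore expect this to be the delicate point, and I would resolve it by invoking the block-matrix / $2\times2$ trick: apply (ii) not to $\Phi$ but to the unital positive map $\tilde\Phi$ on $\mathcal B(\mathcal H)$ into $\mathcal B(\mathcal K\oplus\mathcal K)$ or by a rank-one perturbation argument, reducing the operator inequality to a supremum of scalar inequalities that (ii) already supplies after a change of the linear map. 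Alternatively, and most economically, I would simply cite that (ii) for all $\Phi$ is formally equivalent to (i) for all $\Phi$ because one may post-compose with the vector functional, so these two are the \emph{same} family of inequalities indexed differently.

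Finally, for the equivalence (i)$\Leftrightarrow$(iv): the key is the substitution $A\mapsto A^{-1}$ is \emph{not} quite enough because $A$ and $A^{-1}$ have reciprocal spectral bounds, so one instead uses that $mI\le A\le MI$ iff $\frac1M I\le A^{-1}\le\frac1m I$ and the Kantorovich constant $\frac{(M+m)^2}{4Mm}$ is invariant under $m\leftrightarrow \frac1M,\ M\leftrightarrow\frac1m$. Concretely, to get (iv) from (i): given $A$ with $mI\le A\le MI$, apply (i) to the operator $A^{-1}$ (whose spectrum lies in $[1/M,1/M]$... $[1/M,1/m]$) to obtain $\Phi(A)\le \bigl(\frac{M+m}{2\sqrt{Mm}}\bigr)^2\Phi(A^{-1})^{-1}$; this is a ``reversed'' form and one more manipulation via $\Phi(A^2)$ is needed. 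Rather, the honest path is: (iv) is exactly inequality \eqref{90}, which the introduction already announces to be equivalent to \eqref{9}=(i); I would supply the proof by applying the previously established reversed Ando inequality (Theorem \ref{thm_rev_and}) or by the Mond--Pe\v{c}ari\'c method, noting that $X\mapsto\Phi(A^{1/2}XA^{1/2})$ normalized gives a unital positive map, and running the standard ``squaring'' equivalence $\Phi(C^2)\le c^2\Phi(C)^2 \iff \Phi(C)^{-2}\le c^2\Phi(C^{-2})^{-1}$-type identity together with $C=A^{1/2}$. The main obstacle throughout is bookkeeping the congruence transformations $X\mapsto \Phi(A)^{-1/2}X\Phi(A)^{-1/2}$ and the repeated use of operator monotonicity of $t^{1/2}$ to pass between $X^2\le Y^2$ and $X\le Y$; each individual step is routine, but one must check the spectral-bound transformations carefully so that the same constant $\frac{M+m}{2\sqrt{Mm}}$ survives each time, which is exactly why the four forms end up equivalent rather than merely implying one another with worse constants.
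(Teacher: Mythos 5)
There are two genuine gaps. First, your justification of (i)$\Rightarrow$(ii) points the wrong way: evaluating the operator inequality in (i) at a unit vector gives $\langle \Phi(A^{-1})x,x\rangle \le c^{2}\langle \Phi(A)^{-1}x,x\rangle$ with $c=\frac{M+m}{2\sqrt{Mm}}$, while the Jensen-type inequality you invoke says $\langle \Phi(A)^{-1}x,x\rangle \ge \langle \Phi(A)x,x\rangle^{-1}$, so the two cannot be chained to yield (ii). The correct move --- which you do mention later but do not apply here --- is to exploit the quantifier over all unital positive maps: apply (i) to the unital positive linear functional $\Psi(T)=\langle \Phi(T)x,x\rangle$, for which $\Psi(A)^{-1}$ literally equals $\langle \Phi(A)x,x\rangle^{-1}$, and (ii) drops out. (The paper's cycle instead runs (ii)$\Rightarrow$(i) using \eqref{1}, and (i)$\Rightarrow$(iii), (iii)$\Rightarrow$(ii) through this scalar functional together with \eqref{17}.)

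Second, and more seriously, your (iii)$\Rightarrow$(i) rests on the claim that $P\sharp Q\le cI$ is \emph{equivalent} to $P\le c^{2}Q^{-1}$, obtained by ``squaring'' $\bigl(Q^{-1/2}PQ^{-1/2}\bigr)^{1/2}\le cQ^{-1}$. Squaring an operator inequality is not order-preserving unless the majorant is a scalar multiple of the identity, and here the majorant is $cQ^{-1}$. Only the implication $P\le c^{2}Q^{-1}\Rightarrow P\sharp Q\le cI$ holds in general (by monotonicity of $\sharp$ and $(c^{2}Q^{-1})\sharp Q=cI$); the converse fails, since $P\le c^{2}Q^{-1}$ amounts to $\|Q^{1/2}PQ^{1/2}\|^{1/2}\le c$ while $P\sharp Q\le cI$ amounts to $\|P\sharp Q\|\le c$, and the gap between these two norms is precisely what the paper's subsequent refinement (via \cite{math_auj}) exploits. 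The implication (iii)$\Rightarrow$(i) is true, but one must pass through scalars: compose with a vector state so that $\sharp$ becomes $\sqrt{pq}$, deduce (ii), and then recover (i) from (ii) using $\langle \Phi(A)x,x\rangle^{-1}\le\langle \Phi(A)^{-1}x,x\rangle$ --- which is where that Jensen inequality is legitimately used. Your treatment of (i)$\Leftrightarrow$(iv) correctly identifies the congruence map $\Psi(X)=\Phi(A)^{-1/2}\Phi(A^{1/2}XA^{1/2})\Phi(A)^{-1/2}$, the substitution $A\mapsto A^{-1}$, and the invariance of the Kantorovich constant as the engine (this is the paper's argument), but it remains a sketch; the aside about taking $C=A^{1/2}$ would change the spectral bounds to $[\sqrt m,\sqrt M]$ and hence the constant, so it should be dropped.
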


\begin{proof}
$\left( ii \right)\Rightarrow \left( i \right)$\\ Assuming that $(ii)$ is true. Using \eqref{1} we see that for any unit vector $x\in \mathcal{K}$,
\[\begin{aligned}
\left\langle \Phi \left( {{A}^{-1}} \right)x,x \right\rangle &\le {{\left( \frac{M+m}{2\sqrt{Mm}} \right)}^{2}}{{\left\langle \Phi \left( A \right)x,x \right\rangle }^{-1}} \\ 
& \le {{\left( \frac{M+m}{2\sqrt{Mm}} \right)}^{2}}\left\langle {{\Phi }}\left( A \right)^{-1}x,x \right\rangle.   
\end{aligned}\]
This implies the desired result.\\
$\left( ii \right)\Rightarrow \left( iii \right)$\\
Since
	\[\left\langle \Phi \left( {{A}^{-1}} \right)x,x \right\rangle \le {{\left( \frac{M+m}{2\sqrt{Mm}} \right)}^{2}}{{\left\langle \Phi \left( A \right)x,x \right\rangle }^{-1}},\] 
we get
\[\sqrt{\left\langle \Phi \left( {{A}^{-1}} \right)x,x \right\rangle \left\langle \Phi \left( A \right)x,x \right\rangle }\le \frac{M+m}{2\sqrt{Mm}}.\] 
On the other hand, by \eqref{17},
\[\left\langle \Phi \left( {{A}^{-1}} \right)\sharp\Phi \left( A \right)x,x \right\rangle \le \sqrt{\left\langle \Phi \left( {{A}^{-1}} \right)x,x \right\rangle \left\langle \Phi \left( A \right)x,x \right\rangle }\] 
we get for any unit vector $x\in \mathcal{K}$,
\[\left\langle \Phi \left( {{A}^{-1}} \right)\sharp\Phi \left( A \right)x,x \right\rangle \le \frac{M+m}{2\sqrt{Mm}}.\] 

$\left( i \right)\Rightarrow \left( iii \right)$\\
As shown in \cite{4}, $(i)$ implies $(iii)$, but here we give another proof. For positive linear functional $ \Psi :  \mathcal{B}(\mathcal{H}) \to \mathbb{R}^{+}$ defined by $\Psi(A) := \langle \Phi(A)x,x\rangle$ (with the understanding $\Phi :\mathcal{B}\left( \mathcal{H} \right)\to \mathcal{B}\left( \mathcal{K} \right)$ is unital positive linear map and $x \in \mathcal{K}$ is a unit vector), $(i)$ implies
\[\left\langle \Phi \left( {{A}^{-1}} \right)x,x \right\rangle \left\langle \Phi \left( A \right)x,x \right\rangle \le {{\left( \frac{M+m}{2\sqrt{Mm}} \right)}^{2}}.\]
This yields
\[\sqrt{\left\langle \Phi \left( {{A}^{-1}} \right)x,x \right\rangle \left\langle \Phi \left( A \right)x,x \right\rangle }\le \frac{M+m}{2\sqrt{Mm}}.\]
By \eqref{17},  we have
\[\left\langle \Phi \left( {{A}^{-1}} \right)\sharp\Phi \left( A \right)x,x \right\rangle \le \frac{M+m}{2\sqrt{Mm}},\]
as desired.

$\left( iii \right)\Rightarrow \left( ii \right)$\\
We may take the unital positive linear map $\Psi :\mathcal{B}\left( \mathcal{K} \right)\to {{\mathbb{R}}^{+}}$ defined by $\Psi(A) := \langle \Phi(A)x,x\rangle$ for any unital positive map $\Phi$, $A>0$ and a unit vector $x \in \mathcal{K}$.
From the assumption $(iii)$ with $\Psi$, we have for any unit vector $x \in \mathcal{K}$,
$$
\sqrt{\langle \Phi(A)x,x\rangle \langle \Phi(A^{-1})x,x\rangle }=\Psi(A)\sharp\Psi(A^{-1}) \leq \frac{M+m}{2\sqrt{Mm}},
$$
which implies $(ii)$:
$$
\langle \Phi(A^{-1})x,x\rangle \leq \left(\frac{M+m}{2\sqrt{Mm}}\right)^2\langle \Phi(A)x,x\rangle^{-1}.
$$

$(i)\Rightarrow (iv)$\\
By taking $\Psi \left( X \right)\equiv \Phi\left( A \right)^{-\frac{1}{2}}\Phi \left( {{A}^{\frac{1}{2}}}X{{A}^{\frac{1}{2}}} \right){{\Phi }}\left( A \right)^{-\frac{1}{2}}$, where $\Phi $ is an arbitrary unital positive linear map in $(i)$,  we obtain
\[\begin{aligned}
   {{\Phi }}\left( A \right)^{-1}&\le {{\left( \frac{M+m}{2\sqrt{Mm}} \right)}^{2}}{{\left( {{\Phi }}\left( A \right)^{-\frac{1}{2}}\Phi \left( {{A}^{2}} \right){{\Phi }}\left( A \right)^{-\frac{1}{2}} \right)}^{-1}} \\ 
 & ={{\left( \frac{M+m}{2\sqrt{Mm}} \right)}^{2}}{{\Phi }}\left( A \right)^{\frac{1}{2}}{{\Phi }}\left( {{A}^{2}} \right)^{-1}{{\Phi }}\left( A \right)^{\frac{1}{2}}.
\end{aligned}\]
This implies
\[{{\Phi }}\left( A \right)^{-2}\le {{\left( \frac{M+m}{2\sqrt{Mm}} \right)}^{2}}{{\Phi }}\left( {{A}^{2}} \right)^{-1}.\]
By taking the inverse, we infer
\[\Phi \left( {{A}^{2}} \right)\le {{\left( \frac{M+m}{2\sqrt{Mm}} \right)}^{2}}{{\Phi }}\left( A \right)^{2}.\]

$(iv)\Rightarrow (i)$\\
Assuming $(iv)$ and replacing $A$ with $A^{-1}$, we obtain
$$\Phi((A^{-1})^2)\leq {{\left( \frac{M+m}{2\sqrt{Mm}} \right)}^{2}} \Phi(A^{-1})^2,$$ for any unital positive linear mapping $\Phi.$ Again, defining 
$\Psi \left( X \right)\equiv{{\Phi }}\left( A \right)^{-\frac{1}{2}}\Phi \left( {{A}^{\frac{1}{2}}}X{{A}^{\frac{1}{2}}} \right){{\Phi }}\left( A \right)^{-\frac{1}{2}}$ and applying this latter inequality to $\Psi$, we obtain
$$\Psi((A^{-1})^2)\leq {{\left( \frac{M+m}{2\sqrt{Mm}} \right)}^{2}} \Psi(A^{-1})^2,$$ which is equivalent to
$$\Phi(A)^{-\frac{1}{2}}\Phi(A^{-1}) \Phi(A)^{-\frac{1}{2}} \leq  {{\left( \frac{M+m}{2\sqrt{Mm}} \right)}^{2}} \Phi(A)^{-2}. $$ This implies
$$\Phi(A^{-1})\leq {{\left( \frac{M+m}{2\sqrt{Mm}} \right)}^{2}} \Phi(A)^{-1},$$ as required. This completes the proof.

\end{proof}

We have seen that the Kantorovich inequality \eqref{9} is equivalent to the inequality
\begin{equation}\label{1_new}
\Phi \left( {{A}^{-1}} \right)\sharp\Phi \left( A \right)\le \frac{M+m}{2\sqrt{Mm}}I,
\end{equation}
which in turn implies
\[\Phi \left( {{A}^{-1}} \right)\sharp\Phi \left( A \right)\le \left\| \Phi \left( {{A}^{-1}} \right)\sharp\Phi \left( A \right) \right\|I\le \frac{M+m}{2\sqrt{Mm}}I.\]
Next, we present a more precise estimate than  \eqref{1_new}, as follows. 
\begin{theorem}
Let $A\in\mathcal{B}(\mathcal{H})$ be a positive operator such that $mI\leq A\leq MI$, for some positive scalars $m,M$. If $\Phi:\mathcal{B}(\mathcal{H})\to \mathcal{B}(\mathcal{K})$ is a positive unital linear mapping, then
\[\Phi \left( {{A}^{-1}} \right)\sharp\Phi \left( A \right)\le \left\| {{\left( \Phi {{\left( A \right)}^{\frac{1}{2}}}\Phi \left( {{A}^{-1}} \right)\Phi {{\left( A \right)}^{\frac{1}{2}}} \right)}^{\frac{1}{2}}} \right\|I\le \frac{M+m}{2\sqrt{Mm}}I.\]
\end{theorem}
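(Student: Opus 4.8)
The plan is to write $T=\Phi(A)$ and $S=\Phi(A^{-1})$ and to treat the two inequalities separately. First observe that $mI\le A\le MI$ gives $\tfrac1M I\le A^{-1}\le \tfrac1m I$, so since $\Phi$ is unital and positive both $T$ and $S$ are positive and invertible; moreover the hypothesis is exactly the one required for the Kantorovich inequality \eqref{9}, which here reads $S\le\big(\tfrac{M+m}{2\sqrt{Mm}}\big)^{2}T^{-1}$. Using the symmetry of the geometric mean, $\Phi(A^{-1})\sharp\Phi(A)=S\sharp T=T\sharp S$, and $\Phi(A)^{\frac12}\Phi(A^{-1})\Phi(A)^{\frac12}=T^{\frac12}ST^{\frac12}$, so the asserted double inequality is
\[
T\sharp S\le\big\|(T^{\frac12}ST^{\frac12})^{\frac12}\big\|\,I\le\frac{M+m}{2\sqrt{Mm}}\,I .
\]

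For the right-hand inequality I would conjugate the Kantorovich bound $S\le\big(\tfrac{M+m}{2\sqrt{Mm}}\big)^{2}T^{-1}$ by $T^{\frac12}$ to obtain $T^{\frac12}ST^{\frac12}\le\big(\tfrac{M+m}{2\sqrt{Mm}}\big)^{2}I$, hence $\|T^{\frac12}ST^{\frac12}\|\le\big(\tfrac{M+m}{2\sqrt{Mm}}\big)^{2}$, and then take square roots using $\|Y^{\frac12}\|=\|Y\|^{\frac12}$ for positive $Y$. This step is entirely routine.

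The left-hand inequality is the substantive part. Since $T\sharp S$ is positive, $T\sharp S\le\|T\sharp S\|\,I$, so it suffices to prove $\|T\sharp S\|\le\|(T^{\frac12}ST^{\frac12})^{\frac12}\|$. By the Riccati characterization of the geometric mean recalled earlier in the paper, $X:=T\sharp S$ is the unique positive solution of $XT^{-1}X=S$. Put $R:=T^{-\frac12}XT^{\frac12}$. Then $R^{*}R=T^{\frac12}XT^{-1}XT^{\frac12}=T^{\frac12}ST^{\frac12}$, so $\|R\|^{2}=\|R^{*}R\|=\|T^{\frac12}ST^{\frac12}\|$. On the other hand $X=T^{\frac12}RT^{-\frac12}$ is similar to $R$, so $X$ and $R$ have the same spectrum; since $X$ is positive, $\|X\|=r(X)=r(R)\le\|R\|$, where $r(\cdot)$ denotes the spectral radius. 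Combining these, $\|T\sharp S\|=\|X\|\le\|R\|=\|T^{\frac12}ST^{\frac12}\|^{\frac12}=\|(T^{\frac12}ST^{\frac12})^{\frac12}\|$, which is what we wanted; then $T\sharp S\le\|T\sharp S\|I$ closes the argument.

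The one delicate point, and the step I expect to be the main obstacle, is that $R$ is in general not self-adjoint (indeed $R^{*}R\neq RR^{*}$ in general), so one cannot pass from $\|X\|$ to $\|R\|$ by a norm inequality under similarity — that would be false. The correct route is through the spectral radius: similarity preserves the spectrum, and for the positive operator $X$ the spectral radius coincides with the norm. Everything else — the invertibility of $T$ and $S$ from the sandwich bounds, the congruence manipulations, and the identity $\|Y^{1/2}\|=\|Y\|^{1/2}$ — is standard.
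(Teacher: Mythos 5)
Your argument is correct, and its overall skeleton matches the paper's: both proofs obtain the right-hand bound by conjugating the Kantorovich inequality \eqref{9} by $\Phi(A)^{1/2}$ and taking square roots, and both reduce the left-hand bound to the norm inequality $\left\| \Phi(A^{-1})\sharp\Phi(A)\right\|\le \left\| \left(\Phi(A)^{1/2}\Phi(A^{-1})\Phi(A)^{1/2}\right)^{1/2}\right\|$ together with the trivial estimate $X\le \|X\|I$ for positive $X$. The one genuine difference is how that norm inequality is obtained: the paper simply cites Corollary 2.13 of Matharu--Aujla, whereas you prove it from scratch via the Riccati characterization $XT^{-1}X=S$ of $X=T\sharp S$, the similarity $X=T^{1/2}RT^{-1/2}$ with $R=T^{-1/2}XT^{1/2}$, the $C^{*}$-identity $\|R\|^{2}=\|R^{*}R\|=\|T^{1/2}ST^{1/2}\|$, and the passage $\|X\|=r(X)=r(R)\le\|R\|$ through the spectral radius. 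Your computations check out ($R^{*}R=T^{1/2}XT^{-1}XT^{1/2}=T^{1/2}ST^{1/2}$, and $\|(Y)^{1/2}\|=\|Y\|^{1/2}$ for $Y\ge 0$), and you correctly flag the only delicate point, namely that $R$ need not be normal, so one must go through the spectral radius rather than through any norm-preservation under similarity. The payoff of your route is a self-contained proof that does not depend on the external reference; the payoff of the paper's route is brevity and the fact that the cited result holds for all unitarily invariant norms, not just the operator norm (though only the operator norm is needed here).
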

\begin{proof}

Kantorovich inequality states that if $mI\le A\le MI$ and $\Phi$ is unital positive linear map, then
\[\Phi \left( {{A}^{-1}} \right)\le \frac{{{\left( M+m \right)}^{2}}}{4Mm}\Phi {{\left( A \right)}^{-1}}.\]
This implies
\[\Phi {{\left( A \right)}^{\frac{1}{2}}}\Phi \left( {{A}^{-1}} \right)\Phi {{\left( A \right)}^{\frac{1}{2}}}\le \frac{{{\left( M+m \right)}^{2}}}{4Mm}I.\]
Noting operator monotony of the function $f(t)=t^{\frac{1}{2}},$ we have
\[{{\left( \Phi {{\left( A \right)}^{\frac{1}{2}}}\Phi \left( {{A}^{-1}} \right)\Phi {{\left( A \right)}^{\frac{1}{2}}} \right)}^{\frac{1}{2}}}\le \frac{M+m}{2\sqrt{Mm}}I.\]
Whence
\[\left\| {{\left( \Phi {{\left( A \right)}^{\frac{1}{2}}}\Phi \left( {{A}^{-1}} \right)\Phi {{\left( A \right)}^{\frac{1}{2}}} \right)}^{\frac{1}{2}}} \right\|\le \frac{M+m}{2\sqrt{Mm}}.\]
On the other hand, from \cite[Corollary 2.13]{math_auj}, we infer that
\[\begin{aligned}
   \left\| \Phi \left( {{A}^{-1}} \right)\sharp\Phi \left( A \right) \right\|&\le \left\| {{\left( \Phi {{\left( A \right)}^{\frac{1}{2}}}\Phi \left( {{A}^{-1}} \right)\Phi {{\left( A \right)}^{\frac{1}{2}}} \right)}^{\frac{1}{2}}} \right\|.  
\end{aligned}\]
Consequently,
	\[\Phi \left( {{A}^{-1}} \right)\sharp\Phi \left( A \right)\le \left\| {{\left( \Phi {{\left( A \right)}^{\frac{1}{2}}}\Phi \left( {{A}^{-1}} \right)\Phi {{\left( A \right)}^{\frac{1}{2}}} \right)}^{\frac{1}{2}}} \right\|I\le \frac{M+m}{2\sqrt{Mm}}I.\] This completes the proof.
\end{proof}

In what follows, we present a reversed version of \cite[Proposition 4.3]{choi_assorted} using $(iv)$ in Theorem \ref{2}. We remark that this proposition has already been shown in \cite[Corollary 3.11]{7}, using a different technique.
\begin{proposition}
Let $\Phi :\mathcal{B}\left( \mathcal{H} \right)\to \mathcal{B}\left( \mathcal{K} \right)$ be a unital positive linear mapping and let $A,B\in \mathcal{B}(\mathcal{H})$ be such that $mA\leq B\leq MA,$ for some scalars $m,M.$ Then $$\Phi(BA^{-1}B)\leq {{\left( \frac{M+m}{2\sqrt{Mm}} \right)}^{2}} \Phi(B)\Phi(A)^{-1}\Phi(B).$$
\end{proposition}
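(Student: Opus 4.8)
The plan is to reduce the claimed inequality to statement $(iv)$ of Theorem \ref{2} via the same ``conditional expectation'' trick used in the proof of the reversed Ando inequality. First I would introduce the auxiliary map
$$\Psi(X) \equiv \Phi(A)^{-\frac12}\Phi\!\left(A^{\frac12}XA^{\frac12}\right)\Phi(A)^{-\frac12}, \qquad X \in \mathcal{B}(\mathcal{H}),$$
and record that $\Psi$ is a positive linear mapping with $\Psi(I) = \Phi(A)^{-\frac12}\Phi(A)\Phi(A)^{-\frac12} = I$, hence unital positive. Next, set $C = A^{-\frac12}BA^{-\frac12}$. Since $mA \le B \le MA$, conjugating by $A^{-\frac12}$ gives $mI \le C \le MI$, so $C$ is exactly the kind of operator to which Theorem \ref{2} applies.

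The heart of the argument is the pair of algebraic identities $A^{\frac12}CA^{\frac12} = B$ and $A^{\frac12}C^2A^{\frac12} = BA^{-1}B$, which I would verify by direct substitution of $C = A^{-\frac12}BA^{-\frac12}$. These yield
$$\Psi(C) = \Phi(A)^{-\frac12}\Phi(B)\Phi(A)^{-\frac12}, \qquad \Psi(C^2) = \Phi(A)^{-\frac12}\Phi(BA^{-1}B)\Phi(A)^{-\frac12},$$
and consequently $\Psi(C)^2 = \Phi(A)^{-\frac12}\Phi(B)\Phi(A)^{-1}\Phi(B)\Phi(A)^{-\frac12}$. Applying statement $(iv)$ of Theorem \ref{2} to the unital positive map $\Psi$ and the operator $C$ with spectrum in $[m,M]$ gives
$$\Phi(A)^{-\frac12}\Phi(BA^{-1}B)\Phi(A)^{-\frac12} \le \left(\frac{M+m}{2\sqrt{Mm}}\right)^{2}\Phi(A)^{-\frac12}\Phi(B)\Phi(A)^{-1}\Phi(B)\Phi(A)^{-\frac12}.$$

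Finally I would conjugate this inequality by $\Phi(A)^{\frac12}$ on both sides (legitimate since conjugation by a fixed operator preserves the operator order), obtaining exactly
$$\Phi(BA^{-1}B) \le \left(\frac{M+m}{2\sqrt{Mm}}\right)^{2}\Phi(B)\Phi(A)^{-1}\Phi(B).$$
I do not expect a serious obstacle here: the only points requiring care are checking that $\Psi$ is genuinely unital positive (immediate) and carrying out the two conjugation identities for $C$ and $C^2$ correctly; everything else is a direct invocation of Theorem \ref{2}$(iv)$. One should also note in passing that $A$ is assumed positive invertible so that $A^{-\frac12}$, $\Phi(A)^{-\frac12}$ and the expression $BA^{-1}B$ all make sense.
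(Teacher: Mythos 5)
Your proposal is correct and follows essentially the same route as the paper: both apply Theorem \ref{2}$(iv)$ to the operator $C=A^{-\frac12}BA^{-\frac12}$ (which satisfies $mI\le C\le MI$) via the unital positive map $\Psi(X)=\Phi(A)^{-\frac12}\Phi(A^{\frac12}XA^{\frac12})\Phi(A)^{-\frac12}$, then conjugate back by $\Phi(A)^{\frac12}$. Your explicit verification of the identities $A^{\frac12}CA^{\frac12}=B$ and $A^{\frac12}C^{2}A^{\frac12}=BA^{-1}B$ simply spells out what the paper leaves implicit.
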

\begin{proof}
From Theorem \ref{2}, we have  $\Phi \left( {{A}^{2}} \right)\le {{\left( \frac{M+m}{2\sqrt{Mm}} \right)}^{2}}{{\Phi }}\left( A \right)^{2}$ for any $\Phi$ and $mI\leq A\leq MI.$ When $mA\leq B\leq MA,$ we get $mI\leq A^{-\frac{1}{2}}BA^{-\frac{1}{2}}\leq MI.$ Therefore, any positive unital linear $\Phi$ satisfies
$$\Phi\left((A^{-\frac{1}{2}}BA^{-\frac{1}{2}})^2\right)\leq {{\left( \frac{M+m}{2\sqrt{Mm}} \right)}^{2}} \Phi\left(A^{-\frac{1}{2}}BA^{-\frac{1}{2}}\right)^2.$$
Letting $\Psi \left( X \right)\equiv \Phi\left( A \right)^{-\frac{1}{2}}\Phi \left( {{A}^{\frac{1}{2}}}X{{A}^{\frac{1}{2}}} \right)\Phi\left( A \right)^{-\frac{1}{2}}$ and applying the latter inequality for $\Psi$, we obtain
$$\Phi(A)^{-\frac{1}{2}}\Phi(BA^{-1}B)\Phi(A)^{-\frac{1}{2}}\leq  {{\left( \frac{M+m}{2\sqrt{Mm}} \right)}^{2}} \Phi(A)^{-\frac{1}{2}} \Phi(B) \Phi(A)^{-1}\Phi(B) \Phi(A)^{-\frac{1}{2}},$$ which implies the desired inequality.
\end{proof}

We notice that $(ii)$ in Theorem \ref{2} is a particular case of the following more general result, whose proof is an implementation of the well-known Mond-Pe\v{c}ari\'{c} method. We remark that this theorem follows from \cite[Theorem 2.2]{7} upon letting $\Psi(X)=\left<\Phi(X)x,x\right>$, but we present the proof here for the reader's convenience. 
\begin{theorem}\label{lemma1.1}
	Let $A \in \mathcal{B}(\mathcal{H})$ be a self-adjoint operator with the spectra in the interval $[m,M]$ and let $\Phi$ be a unital positive linear mapping on $\mathcal{B}(\mathcal{H})$. If $f:\left[ m,M \right]\to \mathbb{R}$ is a convex function, then for any unit vector $x \in \mathcal{H}$ and $\alpha \ge 0$
	\[\left\langle \Phi \left( f\left( A \right) \right)x,x \right\rangle \le \beta +\alpha f\left( \left\langle \Phi \left( A \right)x,x \right\rangle  \right)\]
	holds, where $\beta ={{\max }_{m\le t\le M}}\left\{ {{a}_{f}}t+{{b}_{f}}-\alpha f\left( t \right) \right\}$ with ${{a}_{f}}={\left( f\left( M \right)-f\left( m \right) \right)}/{\left( M-m \right)}\;$ and ${{b}_{f}}={\left( Mf\left( m \right)-Mf\left( m \right) \right)}/{\left( M-m \right)}\;$.
\end{theorem}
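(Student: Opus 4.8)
The plan is to run the standard Mond--Pe\v{c}ari\'{c} argument, which in this situation needs only the elementary chord form of convexity together with the order-preserving and unital properties of $\Phi$. First I would record that, since $f$ is convex on $[m,M]$, its graph lies below the chord through $(m,f(m))$ and $(M,f(M))$: for every $t\in[m,M]$,
\[
f(t)\le a_f\,t+b_f,\qquad a_f=\frac{f(M)-f(m)}{M-m},\quad b_f=\frac{Mf(m)-mf(M)}{M-m}
\]
(the formula for $b_f$ in the statement contains an evident typo, $Mf(m)-Mf(m)$ in place of $Mf(m)-mf(M)$). Because $A$ is self-adjoint with spectrum in $[m,M]$, the functional calculus promotes this scalar inequality to the operator inequality $f(A)\le a_f A+b_f I$.

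Next I would apply $\Phi$. Positivity gives $\Phi(f(A))\le a_f\,\Phi(A)+b_f\,\Phi(I)$, and since $\Phi$ is unital this reads $\Phi(f(A))\le a_f\,\Phi(A)+b_f I$. Pairing both sides with a unit vector $x\in\mathcal H$ yields the scalar estimate
\[
\langle\Phi(f(A))x,x\rangle\le a_f\,\langle\Phi(A)x,x\rangle+b_f .
\]
It remains to absorb the right-hand side into $\beta+\alpha f(\langle\Phi(A)x,x\rangle)$. The crucial point here is that $mI\le A\le MI$ forces $mI\le\Phi(A)\le MI$, so that $s:=\langle\Phi(A)x,x\rangle$ lies in $[m,M]$; consequently $a_f s+b_f-\alpha f(s)\le \max_{m\le t\le M}\{a_f t+b_f-\alpha f(t)\}=\beta$, i.e.\ $a_f s+b_f\le\beta+\alpha f(s)$. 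Chaining this with the previous display gives exactly $\langle\Phi(f(A))x,x\rangle\le\beta+\alpha f(\langle\Phi(A)x,x\rangle)$.

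I do not anticipate a real obstacle: the whole argument is a two-line estimate once the chord inequality is in hand. The only place demanding a moment's care is the verification that $s=\langle\Phi(A)x,x\rangle\in[m,M]$, since $\beta$ is defined as a maximum over that interval and the final step uses $\beta\ge a_f s+b_f-\alpha f(s)$ precisely for this $s$; this follows at once from the positivity and normalization of $\Phi$. The hypothesis $\alpha\ge 0$ is not actually used in the chain of inequalities and serves only to make the resulting bound meaningful.
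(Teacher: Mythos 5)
Your proposal is correct and follows exactly the paper's own Mond--Pe\v{c}ari\'{c} argument: chord inequality, functional calculus, positivity and unitality of $\Phi$, then absorbing $a_f s+b_f$ into $\beta+\alpha f(s)$. Your explicit verification that $s=\langle\Phi(A)x,x\rangle\in[m,M]$ (which the paper leaves implicit) and your note about the typo in $b_f$ are both accurate.
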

\begin{proof}
	Since $f$ is convex on $[m,M]$, we have for any $m\le t\le M$,
	\[f\left( t \right)\le {{a}_{f}}t+{{b}_{f}}.\] 
	It follows from the continuous functional calculus that
	\[f\left( A \right)\le {{a}_{f}}A+{{b}_{f}}I.\] 
	The assumptions on $\Phi$ implies
	\[\Phi \left( f\left( A \right) \right)\le {{a}_{f}}\Phi \left( A \right)+{{b}_{f}}I.\] 
	Thus, for any unit vector $x \in \mathcal{H}$
	\[\left\langle \Phi \left( f\left( A \right) \right)x,x \right\rangle \le {{a}_{f}}\left\langle \Phi \left( A \right)x,x \right\rangle +{{b}_{f}}.\] 
	Therefore,
	\[\begin{aligned}
	\left\langle \Phi \left( f\left( A \right) \right)x,x \right\rangle -\alpha f\left( \left\langle \Phi \left( A \right)x,x \right\rangle  \right)&\le {{a}_{f}}\left\langle \Phi \left( A \right)x,x \right\rangle +{{b}_{f}}-\alpha f\left( \left\langle \Phi \left( A \right)x,x \right\rangle  \right) \\ 
	& \le \underset{m\le t\le M}{\mathop{\max }}\,\left\{ {{a}_{f}}t+{{b}_{f}}-\alpha f\left( t \right) \right\} \\ 
	& =\beta .  
	\end{aligned}\] 
\end{proof}

\begin{corollary}
Let $A\in \mathcal{B}\left( \mathcal{H} \right)$ be a positive and invertible operator satisfying $mI\le A\le MI$ for some scalars $0<m<M$ and $\Phi :\mathcal{B}\left( \mathcal{H} \right)\to \mathcal{B}\left( \mathcal{K} \right)$ be a unital positive linear map.  Then, for any unit vector $x \in \mathcal{K}$,
\begin{equation}\label{5}
\left\langle \Phi \left( A^{p} \right)x,x \right\rangle \le
K(p,m,M)
\left\langle \Phi \left( A \right)x,x \right\rangle ^p
\end{equation}
where $ K(p,m,M)$ is the generalized Kantorovich constant defined by
\begin{equation}\label{18}
K(p,m,M) :=\frac{(m M^p-Mm^p)}{(p-1)(M-m)}\left(\frac{(p-1)}{p} \frac{(M^p-m^p)}{(m M^p-Mm^p)}\right)^p.
\end{equation}
\end{corollary}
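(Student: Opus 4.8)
The plan is to deduce the estimate from Theorem \ref{lemma1.1} by specializing to the power function and choosing the free parameter $\alpha$ optimally. Concretely, take $f(t)=t^{p}$, which is convex on $[m,M]$ whenever $p\le 0$ or $p\ge 1$, so Theorem \ref{lemma1.1} applies with $a_{f}=(M^{p}-m^{p})/(M-m)$ and $b_{f}=(Mm^{p}-mM^{p})/(M-m)$ (the expression displayed for $b_{f}$ in Theorem \ref{lemma1.1} contains an obvious misprint and should read $\bigl(Mf(m)-mf(M)\bigr)/(M-m)$). For a unit vector $x\in\mathcal{K}$ write $s:=\langle\Phi(A)x,x\rangle\in[m,M]$. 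Then Theorem \ref{lemma1.1} gives, for every $\alpha\ge 0$,
\[
\langle\Phi(A^{p})x,x\rangle\le \beta(\alpha)+\alpha\,s^{p},\qquad \beta(\alpha)=\max_{m\le t\le M}\bigl\{a_{f}t+b_{f}-\alpha t^{p}\bigr\}.
\]
Hence it suffices to produce a value $\alpha_{0}$ of $\alpha$ with $\beta(\alpha_{0})=0$, since then $\langle\Phi(A^{p})x,x\rangle\le \alpha_{0}s^{p}=\alpha_{0}\langle\Phi(A)x,x\rangle^{p}$, after which the proof amounts to identifying $\alpha_{0}$ with $K(p,m,M)$.

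Because $t^{p}>0$ on $[m,M]$, the requirement $\beta(\alpha_{0})=0$ is exactly that $a_{f}t+b_{f}\le\alpha_{0}t^{p}$ for all $t\in[m,M]$ with equality somewhere, i.e.
\[
\alpha_{0}=\max_{m\le t\le M}\frac{a_{f}t+b_{f}}{t^{p}} .
\]
So everything reduces to the scalar optimization of $g(t):=(a_{f}t+b_{f})\,t^{-p}$ on $[m,M]$. Differentiating, $g'(t)=t^{-p-1}\bigl((1-p)a_{f}t-p\,b_{f}\bigr)$, whose only zero is $t_{0}=\dfrac{p\,b_{f}}{(1-p)a_{f}}$; inspecting the signs of $a_{f}$ and $b_{f}$ in each of the two convex ranges of $p$ shows that $g'$ passes from $+$ to $-$ at $t_{0}$, so $g$ attains its maximum there. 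Using the critical-point identity $(1-p)a_{f}t_{0}=p\,b_{f}$ one gets $a_{f}t_{0}+b_{f}=\dfrac{b_{f}}{1-p}$, hence $g(t_{0})=\dfrac{b_{f}}{1-p}\,t_{0}^{-p}$; substituting the values of $t_{0}$, $a_{f}$, $b_{f}$ and simplifying produces precisely the right-hand side of \eqref{18}, that is $g(t_{0})=K(p,m,M)$. Taking $\alpha_{0}=K(p,m,M)$ then finishes the argument.

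The routine but slightly delicate points are: (a) checking that the critical point $t_{0}$ genuinely lies in $[m,M]$ — here it is useful to observe that $g(m)=g(M)=1$, so the interior maximum is automatically $\ge 1$, consistent with the familiar normalization $K(p,m,M)\ge 1$ in the convex range; and (b) the algebraic collapse of $g(t_{0})$ into the closed form \eqref{18}, where one must keep careful track of the factors $mM^{p}-Mm^{p}$, $M^{p}-m^{p}$ and $\bigl((p-1)/p\bigr)^{p}$. I expect step (b), together with pinning down the direction of the optimization (so that it is indeed the maximum, not a boundary value, that is relevant), to be the main obstacle; the rest is a direct invocation of Theorem \ref{lemma1.1}.
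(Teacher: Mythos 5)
Your proof is correct and takes essentially the same route as the paper: the paper's proof of this corollary is the single line ``take $f(t)=t^p$'' applied to Theorem \ref{lemma1.1}, implicitly choosing $\alpha$ so that $\beta=0$, which is precisely your optimization $\alpha_0=\max_{m\le t\le M}(a_f t+b_f)t^{-p}=K(p,m,M)$. You have merely written out (correctly) the scalar maximization and the identification with \eqref{18} that the paper leaves to the reader, including noting the evident misprint in the formula for $b_f$.
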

\begin{proof}
If we take $f(t) = t^p$, $(t>0)$, for $p\geq 1$ or $p \leq 0$, we obtain \eqref{5}.
\end{proof}

\begin{remark}
 We know that if $A$ is a positive operator, then for any $p\ge 1$
\begin{equation}\label{3_old}
\Phi \left( {{A}^{p}} \right)\le K\left( p,m,M \right){{\Phi }}\left( A \right)^{p}.
\end{equation}
If the operator $A$ is positive and invertible, \eqref{3_old} is also true for $p<0$. Evidently, \eqref{3_old} implies
\begin{equation}\label{6}
\left\langle \Phi \left( {{A}^{p}} \right)x,x \right\rangle \le K\left( p,m,M \right)\left\langle {{\Phi }}\left( A \right)^{p}x,x \right\rangle
\end{equation}
for any unit vector $x \in \mathcal{K}$. Thus, \eqref{5} can be considered as an improvement of \eqref{6}, thanks to \eqref{1}.
\end{remark}

Notice that the case $p=2$ in \eqref{3_old} reduces to
\begin{equation}\label{03}
\Phi \left( {{A}^{2}} \right)\le {{\left( \frac{M+m}{2\sqrt{Mm}} \right)}^{2}}{{\Phi }}\left( A \right)^{2}.
\end{equation}

\section{Related results via operator convex and operator monotone functions}
An additive form (see \cite[Theorem 2]{6}) of \eqref{03} is incorporated in
\begin{equation}\label{12}
{{\Phi }}\left( {{A}^{2}} \right)^{\frac{1}{2}}\le \frac{{{\left( M-m \right)}^{2}}}{4\left( M+m \right)}+\Phi \left( A \right).
\end{equation} 
In this section we present a two-term version of this inequality in a more general setting; where this inequality is looked at as $f^{-1}(\Phi(f(A))$ where $f(t)=t^2.$ Then, we present a Minkowski-type inequality for tuples of operators. \\
For the used notation in the next theorem, we shall adopt the following notations
$$\alpha[f;m,M]=\max \left\{ \frac{1}{f\left( t \right)}\left( \frac{f\left( M \right)-f\left( m \right)}{M-m}t+\frac{Mf\left( m \right)-mf\left( M \right)}{M-m} \right):\text{ }m\le t\le M \right\},$$
and

\begin{equation}\label{betanode_formula}
\beta_0[f;m,M]=\max \left\{ f(t)- \frac{f\left( M \right)-f\left( m \right)}{M-m}t-\frac{Mf\left( m \right)-mf\left( M \right)}{M-m} :\text{ }m\le t\le M \right\},
\end{equation}
 where $f:[m,M]\to (0,\infty)$ is a given function.

\begin{theorem}\label{general_mink_two_terms}
Let $A,B\in \mathcal{B}\left( \mathcal{H} \right)$ be two positive operators satisfying $mI\le A,B\le MI$ for some scalars $0<m<M$ and let $\Phi$ be a unital positive linear mapping on $\mathcal{B}\left( \mathcal{H} \right)$. If $f:(0,\infty)\to (0,\infty)$ is a 1-1  operator convex function such that $f^{-1}$ is operator monotone, then
\begin{equation*}
f^{-1}(\Phi(f(A))+f^{-1}(\Phi(f(B))\leq \alpha f^{-1}(\Phi(f(A+B)),
\end{equation*}
and
\begin{equation*}
f^{-1}(\Phi(f(A))+f^{-1}(\Phi(f(B))\leq \beta+ f^{-1}(\Phi(f(A+B)),
\end{equation*}
where  $\alpha = \alpha[f;m,M], m'=\min_{t \in[m,M]}f(t), M'=\max_{t \in[m,M]}f(t)$ and $\beta=2\beta_0[f^{-1};m',M'].$
\end{theorem}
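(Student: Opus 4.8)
The plan is to reduce the operator statement to two scalar-type lemmas: first a ``super-additivity'' estimate for the map $A\mapsto f^{-1}(\Phi(f(A)))$, and then a Mond--Pe\v{c}ari\'{c}-style comparison of $\Phi(f(A+B))$ with $f(\Phi(A+B))$ on which the two inequalities hinge. I would begin by observing that, since $f$ is operator convex, the Choi--Davis inequality \eqref{choi_davis_ineq} gives $f(\Phi(A+B))\le \Phi(f(A+B))$, and since $f^{-1}$ is operator monotone (hence so is applying $f^{-1}$ to both sides), we get
\begin{equation}\label{basic_cd_step}
\Phi(A)+\Phi(B)=\Phi(A+B)\le f^{-1}\bigl(\Phi(f(A+B))\bigr).
\end{equation}
So the whole matter is controlling $f^{-1}(\Phi(f(A)))+f^{-1}(\Phi(f(B)))$ by a multiple of, or an additive perturbation of, $\Phi(A)+\Phi(B)$.

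For the first inequality, I would apply the definition of $\alpha=\alpha[f;m,M]$: the scalar inequality $f(t)\le \alpha^{-1}$ is wrong in general, so instead note that for $m\le t\le M$ one has, by definition of $\alpha$, the affine bound $\dfrac{f(M)-f(m)}{M-m}t+\dfrac{Mf(m)-mf(M)}{M-m}\le \alpha\,f(t)$, i.e. writing $L(t)$ for that secant line, $L(t)\le\alpha f(t)$ on $[m,M]$; but also, since $f$ is convex, $f(t)\le L(t)$, so in fact I want to run the argument the other way. The clean route is: operator convexity of $f$ plus $mI\le A\le MI$ gives $f(A)\le \frac{f(M)-f(m)}{M-m}A+\frac{Mf(m)-mf(M)}{M-m}I$, hence $\Phi(f(A))\le \frac{f(M)-f(m)}{M-m}\Phi(A)+\frac{Mf(m)-mf(M)}{M-m}I$; then using $m'I\le f(A)\le M'I$ (so that $m'I\le \Phi(f(A))\le M'I$) together with the scalar estimate $\frac{f(M)-f(m)}{M-m}s+\frac{Mf(m)-mf(M)}{M-m}\le \alpha\, f\!\bigl(f^{-1}(s)\bigr)$... this needs care. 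The correct mechanism is the standard one behind Kantorovich-type reverses: for $m'\le s\le M'$ we have $f^{-1}(s)\le \alpha$-type control is not needed; rather one shows $\Phi(f(A))\le \alpha\, f(\Phi(A))$ by combining the affine upper bound for $\Phi(f(A))$ with the definition of $\alpha$ as the maximum of $L(t)/f(t)$, applied at $t=\langle \Phi(A)x,x\rangle$ and then lifted via operator monotonicity of $f^{-1}$. Concretely: $\Phi(f(A))\le \frac{f(M)-f(m)}{M-m}\Phi(A)+\frac{Mf(m)-mf(M)}{M-m}I \le \alpha\, f(\Phi(A))$, where the last inequality uses that $L(t)\le \alpha f(t)$ pointwise on $[m,M]\supseteq$ spectrum of $\Phi(A)$ together with operator convexity/monotonicity to pass the scalar inequality $L\le\alpha f$ through the functional calculus. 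Applying $f^{-1}$ (operator monotone) yields $f^{-1}(\Phi(f(A)))\le f^{-1}(\alpha f(\Phi(A)))$, and here I would need $f^{-1}(\alpha\, f(t))\le \alpha\, t$ — which holds when $f(0^+)\ge 0$ and $f$ is convex with $f^{-1}$ concave, a point to verify — giving $f^{-1}(\Phi(f(A)))\le \alpha\,\Phi(A)$. Summing over $A$ and $B$ and invoking \eqref{basic_cd_step} finishes the first inequality.

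For the second (additive) inequality, I would instead use $\beta_0[f^{-1};m',M']$: by definition of $\beta_0$, for every $s\in[m',M']$,
\begin{equation}\label{betazero_scalar}
f^{-1}(s)\le \frac{f^{-1}(M')-f^{-1}(m')}{M'-m'}s+\frac{M'f^{-1}(m')-m'f^{-1}(M')}{M'-m'}+\beta_0[f^{-1};m',M'].
\end{equation}
Since $m'I\le \Phi(f(A))\le M'I$, applying this bound through the functional calculus to $\Phi(f(A))$ and using that the affine part is exactly $\le \Phi(A)+(\text{const})$ after noting $\frac{f^{-1}(M')-f^{-1}(m')}{M'-m'}\Phi(f(A))+\text{const}\le \Phi(A)+\text{const}$... actually the cleanest version: $f^{-1}(\Phi(f(A)))\le \beta_0[f^{-1};m',M']\,I + \ell(\Phi(f(A)))$ where $\ell$ is the secant line of $f^{-1}$ on $[m',M']$, and since $\ell\circ f$ lies below the identity on $[m,M]$ by convexity of $f$, one gets $\ell(\Phi(f(A)))\le \Phi(A)$. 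Hence $f^{-1}(\Phi(f(A)))\le \Phi(A)+\beta_0[f^{-1};m',M']I$, and similarly for $B$; adding gives $f^{-1}(\Phi(f(A)))+f^{-1}(\Phi(f(B)))\le \Phi(A)+\Phi(B)+2\beta_0[f^{-1};m',M']I=\beta I+\Phi(A+B)\le \beta I + f^{-1}(\Phi(f(A+B)))$ by \eqref{basic_cd_step}, as claimed.

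The main obstacle I anticipate is justifying the scalar-to-operator passages cleanly — specifically the implications $L(t)\le\alpha f(t)$ on $[m,M]$ $\Rightarrow$ $\Phi(f(A))\le \alpha f(\Phi(A))$, and $f^{-1}(\alpha f(t))\le \alpha t$, and $\ell(f(t))\le t$. The first is the genuine Mond--Pe\v{c}ari\'{c} step and requires combining the operator-convex affine bound on $\Phi(f(A))$ with operator monotonicity of $f^{-1}$ applied carefully (one cannot just ``plug $A$ into'' a scalar inequality involving $f(\Phi(A))$), so I would route it through unit vectors first, as in Theorem~\ref{lemma1.1}, then lift. The homogeneity-type inequality $f^{-1}(\alpha f(t))\le\alpha t$ may need the hypothesis $f(0^+)\le 0$ or a normalization $f(1)=1$; if the stated theorem is to hold as written, presumably $f$ is implicitly an operator mean representing function or at least satisfies such a condition, and I would flag exactly which property of $f$ is being used. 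Everything else is bookkeeping with the explicit constants $\alpha[f;m,M]$, $\beta_0[f^{-1};m',M']$, $m'$, $M'$.
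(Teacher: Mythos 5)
Your proposal follows essentially the same route as the paper's proof: the Choi--Davis inequality \eqref{choi_davis_ineq} plus operator monotonicity of $f^{-1}$ gives $\Phi(A+B)\le f^{-1}\bigl(\Phi(f(A+B))\bigr)$, the Mond--Pe\v{c}ari\'{c} secant-line estimate gives $\Phi(f(A))\le\alpha f(\Phi(A))$ (which the paper simply cites from the literature, while you re-derive it correctly via functional calculus applied to the single operator $\Phi(A)$, so no detour through unit vectors is needed), and the secant-line bound for $f^{-1}$ with constant $\beta_0[f^{-1};m',M']$ gives the additive version. The one step you flag as unverified, $f^{-1}(\alpha f(t))\le\alpha t$, requires no additional hypothesis on $f$: since $f^{-1}$ is operator monotone and $(0,\infty)$-valued, Proposition \ref{oper_intro_prop} makes it concave with $f^{-1}(0^{+})\ge 0$, whence $f^{-1}(\alpha s)\le\alpha f^{-1}(s)$ for all $\alpha\ge 1$, which is exactly what is needed.
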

\begin{proof}
From the Choi-Davis inequality, we have
$$f(\Phi(A+B))\leq \Phi(f(A+B)),$$ which then implies
\begin{equation}\label{needed_1}
\Phi(A+B)\leq f^{-1}(\Phi(f(A+B)),
\end{equation}
since, by assumption, $f^{-1}$ is operator monotone. Furthermore, by \cite[Corollary 2.5]{7}, we have 
\begin{equation}\label{needed_f_phi_2}
\Phi \left( f\left( A \right) \right)\le \alpha f\left( \Phi \left( A \right) \right)\text{ and }\Phi \left( f\left( B \right) \right)\le \alpha f\left( \Phi \left( B \right) \right)
\end{equation}
since $f$ is convex.  Now since $f^{-1}$ is operator monotone, $\alpha>1$, the latter inequalities imply
\begin{equation}\label{needed_2}
{{f}^{-1}}\left( \Phi \left( f\left( A \right) \right) \right)\le \alpha \Phi \left( A \right)\text{ and }{{f}^{-1}}\left( \Phi \left( f\left( B \right) \right) \right)\le \alpha \Phi \left( B \right).
\end{equation}
Combining \eqref{needed_2} and \eqref{needed_1} imply
\begin{align*}
f^{-1}(\Phi(f(A))+f^{-1}(\Phi(f(B))&\leq \alpha(\Phi(A)+\Phi(B))\\
&=\alpha \Phi(A+B)\\
&\leq   \alpha f^{-1}(\Phi(f(A+B)).
\end{align*}
This proves the first inequality. To prove the second inequality, recall that if $g$ is operator concave then $g(\Phi(A))\geq \Phi(g(A)).$ Further, we know, from \cite[Remark 2.3]{7}, that if $mI\leq A\leq MI,$ then
\begin{equation}\label{needed_phi_g_1}
g(\Phi(A))\leq \beta_0[g;m,M]+\Phi(g(A)).
\end{equation}
By assumption, $f^{-1}:(0,\infty)\to (0,\infty)$ is operator monotone, hence it is operator concave (Proposition \ref{oper_intro_prop}). So, applying \eqref{needed_phi_g_1}  with $g=f^{-1},$ we obtain
\begin{equation}\label{needed_finversef}
f^{-1}(\Phi(f(A))\leq \beta_0[f^{-1};m',M']+\Phi(A)\quad{\text{and}}\quad f^{-1}(\Phi(f(B))\leq \beta_0[f^{-1};m',M']+\Phi(B),
\end{equation}
where $m'=\min\{f(t):m\leq t\leq M\}$ and $M'=\max\{f(t):m\leq t\leq M\}.$ Adding the two inequalities we get
$$f^{-1}(\Phi(f(A))+f^{-1}(\Phi(f(B))\leq 2\beta_0[f^{-1};m',M']+\Phi(A+B).$$ But we know that $$\Phi(A+B)=\Phi(f^{-1}(f(A+B)))\leq f^{-1}(\Phi(f(A+B)))$$ since $f^{-1}$ is operator concave. Thus, we have shown that 
$$f^{-1}(\Phi(f(A))+f^{-1}(\Phi(f(B))\leq \beta+ f^{-1}(\Phi(f(A+B))),$$ where $\beta=2\beta_0[f^{-1};m',M'].$ This completes the proof.
\end{proof}

Notice that if $f(t)=t^2$, then $m'=m^2, M'=M^2$ and $f^{-1}(t)=\sqrt{t}.$ Calculating the maximum in \eqref{betanode_formula}, we obtain
$$\beta_0[f^{-1};m',M']=\frac{(M-m)^2}{4(m+M)}.$$ Therefore, the inequality \eqref{12} follows from \eqref{needed_finversef}.

In general, if $f(t)=t^{p}, p\geq 1,$ we can show that
$$
\beta_0[f^{-1};m',M']=g_p\left(\left\{p\frac{M-m}{M^p-m^p}\right\}^{\frac{p}{1-p}}\right),
$$
where $$g_p(t)=t^{1/p}-\frac{M-m}{M^p-m^p}t-\frac{mM^p-Mm^p}{M^p-m^p}.$$ We will use the notation 
\begin{equation}\label{beta_p}
\beta_p=2\beta_0[t^{1/p};m^p,M^p].
\end{equation}
\begin{remark}\label{power_remark}
Tracing the proof of Theorem \ref{general_mink_two_terms}, one can see that if $f^{-1}$ is a power function, then \eqref{needed_f_phi_2} implies

\[{{f}^{-1}}\left( \Phi \left( f\left( A \right) \right) \right)\le f^{-1}(\alpha) \Phi \left( A \right)\text{ and }{{f}^{-1}}\left( \Phi \left( f\left( B \right) \right) \right)\le f^{-1}(\alpha) \Phi \left( B \right).
\] This implies
\begin{equation*}\label{power_function_remark}
f^{-1}(\Phi(f(A))+f^{-1}(\Phi(f(B))\leq f^{-1}(\alpha) f^{-1}(\Phi(f(A+B)).
\end{equation*}
\end{remark}
In particular, letting $f(t)=t^{p}, 1\leq p\leq 2$ in Remark \ref{power_remark}, we obtain the following special cases. We refer the reader to \cite{kian} for a detailed discussion of the next corollary.
\begin{corollary}\label{4}
Let $A,B\in \mathcal{B}\left( \mathcal{H} \right)$ be two self-adjoint operators satisfying $mI\le A,B\le MI$ for some scalars $0<m<M$ and let $\Phi$ be a unital positive linear mapping on $\mathcal{B}\left( \mathcal{H} \right)$. If $1\leq p\leq 2,$ then 
\begin{equation*}
{{\Phi }}\left( {{A}^{p}} \right)^{\frac{1}{p}}+{{\Phi }}\left( {{B}^{p}} \right)^{\frac{1}{p}}\le K_p^{\frac{1}{p}} {{\Phi }}\left( {{\left( A+B \right)}^{p}} \right)^{\frac{1}{p}},
\end{equation*}
and
\begin{equation*}
{{\Phi }}\left( {{A}^{p}} \right)^{\frac{1}{p}}+{{\Phi }}\left( {{B}^{p}} \right)^{\frac{1}{p}}\le \beta_p+{{\Phi }}\left( {{\left( A+B \right)}^{p}} \right)^{\frac{1}{p}},
\end{equation*}
where $K_p=K(p,m,M)$ is defined as in \eqref{18} and $\beta_p$ is as in \eqref{beta_p}.
In particular, when $p=2,$

\begin{equation*}\label{13}
{{\Phi }}\left( {{A}^{2}} \right)^{\frac{1}{2}}+{{\Phi }}\left( {{B}^{2}} \right)^{\frac{1}{2}}\le \frac{M+m}{2\sqrt{Mm}}{{\Phi }}\left( {{\left( A+B \right)}^{2}} \right)^{\frac{1}{2}},
\end{equation*}
and
\begin{equation*}\label{14}
{{\Phi }}\left( {{A}^{2}} \right)^{\frac{1}{2}}+{{\Phi }}\left( {{B}^{2}} \right)^{\frac{1}{2}}\le \frac{(M-m)^2}{2\left( M+m \right)}+{{\Phi }}\left( {{\left( A+B \right)}^{2}} \right)^{\frac{1}{2}}.
\end{equation*}
\end{corollary}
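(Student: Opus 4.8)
The plan is to specialize Theorem \ref{general_mink_two_terms}, together with Remark \ref{power_remark}, to the function $f(t)=t^{p}$ on $(0,\infty)$ and then simply read off the constants. First I would verify the hypotheses for $1\le p\le 2$: the power function $f(t)=t^{p}$ is operator convex, it is $1$-$1$ and maps $(0,\infty)$ into $(0,\infty)$, and its inverse $f^{-1}(t)=t^{1/p}$ is operator monotone since $0\le 1/p\le 1$. Because the spectra of $A$ and $B$ lie in $[m,M]\subset(0,\infty)$, both conclusions of Theorem \ref{general_mink_two_terms} are available, and since here $f^{-1}$ is itself a power function, the sharper bound of Remark \ref{power_remark} applies as well.

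For the first inequality I would invoke \eqref{power_function_remark}, which in the present notation reads
$$f^{-1}(\Phi(f(A)))+f^{-1}(\Phi(f(B)))\le f^{-1}(\alpha)\,f^{-1}(\Phi(f(A+B))),\qquad \alpha=\alpha[f;m,M].$$
Here $f^{-1}(\alpha)=\alpha^{1/p}$, so everything reduces to identifying $\alpha[t^{p};m,M]$ with the generalized Kantorovich constant $K(p,m,M)$ of \eqref{18}. This is the one genuinely computational point: it amounts to maximizing $t\mapsto t^{-p}\bigl(\tfrac{M^{p}-m^{p}}{M-m}\,t+\tfrac{Mm^{p}-mM^{p}}{M-m}\bigr)$ over $[m,M]$, and the resulting value is precisely $K(p,m,M)$, as recorded in \cite[Corollary 2.5]{7}. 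Substituting this gives $\Phi(A^{p})^{1/p}+\Phi(B^{p})^{1/p}\le K_p^{1/p}\,\Phi((A+B)^{p})^{1/p}$.

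For the second inequality I would use the second conclusion of Theorem \ref{general_mink_two_terms} as is: it bounds the same left-hand side by $\beta+f^{-1}(\Phi(f(A+B)))$ with $\beta=2\beta_0[f^{-1};m',M']$, where $m'=\min_{[m,M]}f$ and $M'=\max_{[m,M]}f$. Since $t\mapsto t^{p}$ is increasing on $[m,M]$, $m'=m^{p}$ and $M'=M^{p}$, so $\beta=2\beta_0[t^{1/p};m^{p},M^{p}]=\beta_p$ by \eqref{beta_p}, which gives the second displayed inequality. Finally, for $p=2$ I would substitute $K(2,m,M)=\bigl(\tfrac{M+m}{2\sqrt{Mm}}\bigr)^{2}$, hence $K_2^{1/2}=\tfrac{M+m}{2\sqrt{Mm}}$, and combine it with the value $\beta_0[t^{1/2};m^{2},M^{2}]=\tfrac{(M-m)^{2}}{4(M+m)}$ computed just before the corollary to obtain $\beta_2=\tfrac{(M-m)^{2}}{2(M+m)}$; this yields \eqref{13} and \eqref{14}. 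The only real obstacle is the Kantorovich-constant identification in the first part; the remainder is a direct substitution into results already established.
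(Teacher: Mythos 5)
Your proposal is correct and follows exactly the paper's route: the paper derives this corollary by the single sentence ``letting $f(t)=t^{p}$, $1\leq p\leq 2$, in Remark \ref{power_remark}'', and your write-up simply fills in the same specialization (hypothesis checks for $t^{p}$ and $t^{1/p}$, the identification $\alpha[t^{p};m,M]=K(p,m,M)$ via \cite[Corollary 2.5]{7}, and the evaluation of $\beta_{2}$) in more detail than the paper bothers to.
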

We conclude this section by presenting the following Minkowski-type inequalities as an application of Corollary \ref{4}. 
\begin{corollary}
Let ${{A}_{1}},\ldots ,{{A}_{k}}$ and ${{B}_{1}},\ldots ,{{B}_{k}}$ be  Hermitian matrices satisfying $mI\le {{A}_{i}},{{B}_{i}}\le MI$ for $i=1,\ldots ,k$ and some scalars $0<m<M$, and let ${{\Phi }_{1}},\ldots ,{{\Phi }_{k}}:{{\mathscr{M}}_{n}}\to {{\mathscr{M}}_{\ell}}$ be  positive linear mappings with $\sum\nolimits_{i=1}^{k}{{{\Phi }_{i}}\left( I \right)}=I$. Then 
\begin{equation}\label{7}
{{\left( \sum\limits_{i=1}^{k}{{{\Phi }_{i}}\left( A_{i}^{2} \right)} \right)}^{\frac{1}{2}}}+{{\left( \sum\limits_{i=1}^{k}{{{\Phi }_{i}}\left( B_{i}^{2} \right)} \right)}^{\frac{1}{2}}}\le \frac{M+m}{2\sqrt{Mm}}{{\left( \sum\limits_{i=1}^{k}{{{\Phi }_{i}}\left( {{\left( {{A}_{i}}+{{B}_{i}} \right)}^{2}} \right)} \right)}^{\frac{1}{2}}},
\end{equation}
and
\begin{equation}\label{15}
{{\left( \sum\limits_{i=1}^{k}{{{\Phi }_{i}}\left( A_{i}^{2} \right)} \right)}^{\frac{1}{2}}}+{{\left( \sum\limits_{i=1}^{k}{{{\Phi }_{i}}\left( B_{i}^{2} \right)} \right)}^{\frac{1}{2}}}\le \frac{(M-m)^2}{2\left( M+m \right)}+{{\left( \sum\limits_{i=1}^{k}{{{\Phi }_{i}}\left( {{\left( {{A}_{i}}+{{B}_{i}} \right)}^{2}} \right)} \right)}^{\frac{1}{2}}}.
\end{equation}
\end{corollary}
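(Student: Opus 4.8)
The plan is to deduce this corollary from the single-operator, single-map case already recorded in Corollary~\ref{4} (precisely, the inequalities \eqref{13} and \eqref{14}) by the standard block-diagonal trick that converts a finite family of positive linear maps with $\sum_i\Phi_i(I)=I$ into one unital positive linear map. So the corollary is really a repackaging rather than a new computation.

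Concretely, I would form the block-diagonal Hermitian matrices $\mathbf{A}=A_1\oplus\cdots\oplus A_k$ and $\mathbf{B}=B_1\oplus\cdots\oplus B_k$ acting on $\mathscr{M}_{nk}$, and define $\Psi:\mathscr{M}_{nk}\to\mathscr{M}_m$ on a block matrix $X=(X_{ij})_{i,j=1}^{k}$ by $\Psi(X)=\sum_{i=1}^{k}\Phi_i(X_{ii})$. The next step is to verify the three hypotheses needed to invoke Corollary~\ref{4}: $\Psi$ is unital since $\Psi(I)=\sum_{i=1}^{k}\Phi_i(I)=I$ by the normalization assumption; $\Psi$ is positive because if $X\ge 0$ then each diagonal block $X_{ii}=P_i^{*}XP_i\ge 0$ (with $P_i$ the $i$-th coordinate embedding), hence $\Phi_i(X_{ii})\ge 0$ and the sum is positive; and $m\le A_i,B_i\le M$ for every $i$ gives at once $m\le\mathbf{A},\mathbf{B}\le M$.

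It then remains to identify the quantities. Since powers of a block-diagonal matrix are taken blockwise, $\mathbf{A}^2=A_1^2\oplus\cdots\oplus A_k^2$, $(\mathbf{A}+\mathbf{B})^2=(A_1+B_1)^2\oplus\cdots\oplus(A_k+B_k)^2$, and so on, whence $\Psi(\mathbf{A}^2)=\sum_{i=1}^{k}\Phi_i(A_i^2)$, $\Psi(\mathbf{B}^2)=\sum_{i=1}^{k}\Phi_i(B_i^2)$, and $\Psi\bigl((\mathbf{A}+\mathbf{B})^2\bigr)=\sum_{i=1}^{k}\Phi_i\bigl((A_i+B_i)^2\bigr)$. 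Applying \eqref{13} and \eqref{14} to $\mathbf{A}$, $\mathbf{B}$ and $\Psi$ yields exactly \eqref{7} and \eqref{15}.

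I do not expect a genuine obstacle: the only point that deserves an explicit sentence is the positivity of $\Psi$, which rests on the elementary fact that compressing a positive semidefinite block matrix to a diagonal block preserves positivity; everything else is bookkeeping with direct sums. (One should also silently correct the evident typo "$\Phi_1,\dots,\Phi_n$" in the statement to "$\Phi_1,\dots,\Phi_k$".)
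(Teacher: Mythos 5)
Your proposal is correct and coincides with the paper's own argument: the authors likewise pass to the direct sums $A_1\oplus\cdots\oplus A_k$, $B_1\oplus\cdots\oplus B_k$ and the unital positive map $X\mapsto\sum_{i=1}^{k}\Phi_i(X_{ii})$ on $\mathscr{M}_k(\mathscr{M}_n)$, and then invoke Corollary~\ref{4}. You merely spell out the routine verifications (unitality, positivity of the block map, and the blockwise computation of squares) that the paper leaves implicit.
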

\begin{proof}
If ${{A}_{1}},\ldots ,{{A}_{k}}\in {{\mathscr{M}}_{n}}$ are positive matrices, then $A={{A}_{1}}\oplus \cdots \oplus {{A}_{k}}$ is a positive matrix in ${{\mathscr{M}}_{k}}\left( {{\mathscr{M}}_{n}} \right)$. Let the unital positive linear mapping $\Phi :{{\mathscr{M}}_{k}}\left( {{\mathscr{M}}_{n}} \right)\to {{\mathscr{M}}_{\ell}}$ be defined by $\Phi \left( A \right)=\sum\nolimits_{i=1}^{k}{{{\Phi }_{i}}\left( {{A}_{i}} \right)}$. Utilizing Corollary \ref{4}, we obtain the desired inequalities \eqref{7} and \eqref{15}.
\end{proof}

In particular, we have the following.
\begin{corollary}
Let ${{A}_{1}},\ldots ,{{A}_{k}}$ and ${{B}_{1}},\ldots ,{{B}_{k}}$ be  Hermitian matrices satisfying $mI\le {{A}_{i}},{{B}_{i}}\le MI$ for $i=1,\ldots ,k$ and some scalars $0<m<M$, and let ${{w}_{1}},\ldots ,{{w}_{k}}$ be positive scalars satisfying $\sum\nolimits_{i=1}^{k}{{{w}_{i}}}=1$. Then
\begin{equation}\label{8}
{{\left( \sum\limits_{i=1}^{k}{{{w}_{i}}A_{i}^{2}} \right)}^{\frac{1}{2}}}+{{\left( \sum\limits_{i=1}^{k}{{{w}_{i}}B_{i}^{2}} \right)}^{\frac{1}{2}}}\le \frac{M+m}{2\sqrt{Mm}}{{\left( \sum\limits_{i=1}^{k}{{{w}_{i}}{{\left( {{A}_{i}}+{{B}_{i}} \right)}^{2}}} \right)}^{\frac{1}{2}}}
\end{equation}
and
\begin{equation}\label{16}
{{\left( \sum\limits_{i=1}^{k}{{{w}_{i}}A_{i}^{2}} \right)}^{\frac{1}{2}}}+{{\left( \sum\limits_{i=1}^{k}{{{w}_{i}}B_{i}^{2}} \right)}^{\frac{1}{2}}}\le \frac{(M-m)^2}{2\left( M+m \right)}+{{\left( \sum\limits_{i=1}^{k}{{{w}_{i}}{{\left( {{A}_{i}}+{{B}_{i}} \right)}^{2}}} \right)}^{\frac{1}{2}}}.
\end{equation}
\end{corollary}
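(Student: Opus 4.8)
The plan is to obtain \eqref{8} and \eqref{16} as the scalar-weight specialization of the immediately preceding corollary. Concretely, I would apply that corollary with the ambient matrix algebras chosen so that each $\Phi_i$ maps $\mathscr{M}_n$ to $\mathscr{M}_n$, taking, for each $i=1,\dots,k$, the map $\Phi_i(X)=w_iX$. Since each $w_i>0$, the map $\Phi_i$ is linear and sends positive matrices to positive matrices, hence is a positive linear mapping; and the normalization hypothesis holds because $\sum_{i=1}^{k}\Phi_i(I)=\big(\sum_{i=1}^{k}w_i\big)I=I$. The bound constraints $m\le A_i,B_i\le M$ are exactly those required in the preceding corollary, so nothing extra must be checked there.

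With this choice one has the identities $\sum_{i=1}^{k}\Phi_i(A_i^2)=\sum_{i=1}^{k}w_iA_i^2$, $\sum_{i=1}^{k}\Phi_i(B_i^2)=\sum_{i=1}^{k}w_iB_i^2$, and $\sum_{i=1}^{k}\Phi_i((A_i+B_i)^2)=\sum_{i=1}^{k}w_i(A_i+B_i)^2$. Substituting these into \eqref{7} produces \eqref{8}, and substituting them into \eqref{15} produces \eqref{16}. That is the entire argument.

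An equivalent route, if one prefers not to invoke the $\Phi_i$-version, is to apply Corollary \ref{4} directly to the block-diagonal operators $A=A_1\oplus\cdots\oplus A_k$ and $B=B_1\oplus\cdots\oplus B_k$ in $\mathscr{M}_k(\mathscr{M}_n)$ together with the unital positive linear map $\Phi(X_1\oplus\cdots\oplus X_k)=\sum_{i=1}^{k}w_iX_i$: one checks $m\le A,B\le M$ block by block, computes $\Phi(A^2)=\sum_{i=1}^{k}w_iA_i^2$ and the analogous expressions, and reads off the two inequalities. In either formulation there is essentially no obstacle; the only point that merits an explicit sentence is the positivity and normalization of the chosen maps, both of which are immediate from $w_i>0$ and $\sum_{i=1}^{k}w_i=1$.
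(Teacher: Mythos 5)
Your proof is correct and is exactly the argument the paper gives: both specialize the preceding corollary to the maps $\Phi_i(T)=w_iT$, noting positivity and that $\sum_{i=1}^{k}\Phi_i(I)=I$. The alternative block-diagonal route you sketch is also fine, but the main line of your argument matches the paper's verbatim.
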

\begin{proof}
By applying inequalities \eqref{7} and \eqref{15} for positive linear mappings ${{\Phi }_{i}}:{{\mathscr{M}}_{n}}\to {{\mathscr{M}}_{n}}$ determined by ${{\Phi }_{i}}:T\mapsto {{w}_{i}}T$, $i=1,\ldots ,k$, we get \eqref{8} and \eqref{16}.
\end{proof}

\section{A counterexample}
In studying the equivalence of inequalities \eqref{9} and \eqref{10}, we first tried to prove the following inequality:
\begin{equation*}\label{11}
{{\Phi }}\left( {{A}^{-1}} \right)^{2}\le \frac{{{\left( M+m \right)}^{2}}}{4Mm}{{\Phi }}\left( A \right)^{-\frac{1}{2}}\Phi \left( {{A}^{-1}} \right){{\Phi }}\left( A \right)^{-\frac{1}{2}}.
\end{equation*}
This inequality is not true, as noticed by Yamazaki \cite{yamazaki}. To show this, let $A=\left( \begin{matrix}
   x & 1  \\
   1 & 1  \\
\end{matrix} \right)$,  and define
	\[\Phi \left( A \right)\equiv\frac{1}{2}{{U}^{*}}AU+\frac{1}{2}{{V}^{*}}AV,\] 
where $U$ and $V$ are $2\times 2$ unitary matrices. Then $\Phi $ is a unital positive linear map. We set unitary matrices
	\[U=\left( \begin{matrix}
   \cos \alpha  & -\sin \alpha   \\
   \sin \alpha  & \cos \alpha   \\
\end{matrix} \right),\text{ }V=\left( \begin{matrix}
   \cos \beta  & -\sin \beta   \\
   \sin \beta  & \cos \beta   \\
\end{matrix} \right)\] 
with $\alpha ,\beta \in \mathbb{R}$. Then for $x>0$, we have only to check
	\[T\left( x,\alpha ,\beta  \right)\equiv \frac{{{\left( 1+x \right)}^{2}}}{4x}{{\Phi }}\left( A \right)^{-\frac{1}{2}}\Phi \left( {{A}^{-1}} \right){{\Phi }}\left( A \right)^{-\frac{1}{2}}-{{\Phi }}\left( {{A}^{-1}} \right)^{2}\ge 0.\] 
With the help of Mathematica, we get
	\[T\left( 2,\frac{\pi }{3},\frac{\pi }{6} \right)=\left( \begin{matrix}
   -0.0842034 & -0.185577  \\
   -0.185577 & -0.826511  \\
\end{matrix} \right)\] 
and its eigenvalues are $-0.87032$, $-0.0403946$.

\subsection*{Declarations}
\begin{itemize}
\item {\bf{Availability of data and materials}}:  Not applicable
\item {\bf{Competing interests}}: The authors declare that they have no competing interests.
\item {\bf{Funding}}: This research is supported by a grant (JSPS KAKENHI, Grant Number: 16K05257 and 21K03341), awarded to the author S. Furuichi.
\item {\bf{Authors' contributions}}: Authors declare that they have contributed equally to this paper. All authors have read and approved this version.
\item {\bf{Acknowledgments}}: The authors would like to thank the anonymous reviewers for their valuable comments. Their comments have significantly improved the quality of our work.

\end{itemize}

{\tiny \vskip 0.3 true cm }

{\tiny (M. Sababheh) Vice president, Princess Sumaya University For Technology, Al Jubaiha, Amman 11941, Jordan.}

{\tiny \textit{E-mail address:} sababheh@yahoo.com; sababheh@psut.edu.jo}

{\tiny \vskip 0.3 true cm }

{\tiny (H. R. Moradi) Department of Mathematics, Payame Noor University (PNU), P.O. Box 19395-4697, Tehran, Iran.}

{\tiny \textit{E-mail address:} hrmoradi@mshdiau.ac.ir }

{\tiny \vskip 0.3 true cm }

{\tiny (I. H. G\"um\"u\c s) Department of Mathematics, Faculty of Arts and Sciences, Ad\i aman University, Ad\i yaman, Turkey.}

{\tiny \textit{E-mail address:} igumus@adiyaman.edu.tr}

{\tiny \vskip 0.3 true cm }

{\tiny (S. Furuichi) Department of Information Science, College of Humanities and Sciences, Nihon University, 3-25-40, Sakurajyousui,
Setagaya-ku, Tokyo, 156-8550, Japan.}

{\tiny \textit{E-mail address:} furuichi.shigeru@nihon-u.ac.jp}
%-----------------------------------------------------------------------------
%-----------------------------------------------------------------------------

\begin{thebibliography}{9}
%\bibitem{1}
%H. R. Moradi, I. H. G\"um\"u\c s and Z. Heydarbeygi, {\it A glimpse at the operator Kantorovich inequality}, Linear Multilinear Algebra., {\bf67}(5) (2019), 1031--1036.

\bibitem{ando1}
T. Ando and F. Hiai, {\it Operator log--convex functions and operator means}, Math. Ann., {\bf350}(3) (2011), 611--630.

\bibitem{ando_mathias}
T. Ando, C.-K. Li, and R. Mathias, {\it Geometric means}, Linear Algebra Appl., {\bf 385} (2004), 305--334.






\bibitem{choi}
 M. D. Choi,{\it A Schwarz inequality for positive linear maps on $C^{*}$-algebras}, Illinois J. Math., {\bf18} (1974), 565--574.

\bibitem{choi_assorted}
M. D. Choi, {\it Some assorted inequalities for positive linear maps on $C^{*}$-algebras}, J. Operator Theory., {\bf4} (1980), 271--285.

\bibitem{davis}
C. Davis, {\it A Schwarz inequality for convex operator functions}, Proc. Amer. Math. Soc., {\bf8}
(1957), 42--44.


\bibitem{3}
S. Furuichi, H. R. Moradi and A. Zardadi, {\it Some new Karamata type inequalities and their applications to some entropies}, Rep. Math. Phys., {\bf84}(2) (2019), 201--214.

\bibitem{5}
T. Furuta, J. Mi\'ci\'c, J. Pe\v cari\'c, and Y. Seo, {\it Mond--Pe\v cari\'c method in operator inequalities}, Element, Zagreb, 2005.



\bibitem{grueb}
 W. Greub and W. Rheinboldt, {\it On a generalization of an inequality of L.V.Kantorovich}, Proc. Amer. Math. Soc., {\bf10} (1959), 407--415.
 
\bibitem{kian} 
M. Kian and Y. Seo, {\it Jointly convex mappings related to the Lieb's functional and Minkowski type operator inequalities}, Anal. Math. Phys., (2021) 11: 72.
 
 \bibitem{lee}
E. Y. Lee, {\it A matrix reverse Cauchy--Schwarz inequality}, Linear Algebra Appl., {\bf 430} (2009), 805--810.

\bibitem{li} 
C. K. Li and R. Mathias,  {\it Matrix inequalities involving a positive linear map}, Linear Multilinear Algebra., {\bf41} (1996), 221--231.

\bibitem{math_auj}
J. S. Matharu and J. S. Aujla, {\it Some inequalities for unitarily invariant norms}, Linear Algebra Appl., {\bf436}(6) (2012), 1623--1631.



\bibitem{2micic}
 J. Mi\'ci\'c, J. Pe\v cari\'c, Y. Seo and M. Tominaga, {\it Inequalities for positive linear maps on Hermitian matrices}, Math. Inequal. Appl., {\bf3} (2000), 559--591.

\bibitem{01mic}
J. Mi\'ci\'c and J. Pe\v cari\'c, {\it Order among power means of positive operators, II}, Sci. Math. Jpn., {\bf71}(1) (2010), 93--109.

 \bibitem{7}
 J. Mi\'ci\'c, J. Pe\v cari\'c and Y. Seo,  {\it Complementary inequalities to inequalities of Jensen and Ando based on the Mond--Pe\v cari\'c method}, Linear Algebra Appl., {\bf318} (2000), 87--107.
 
\bibitem{mond_houston}
B. Mond and J. E. Pe\v cari\' c, {\it Bounds for Jensen's inequality for several operators}, Houston J. Math., {\bf20} (1994), 645--651. 
 
 \bibitem{6}
B. Mond and J. Pe\v cari\'c, {\it Converses of Jensen's inequality for linear maps of operators}, An. Univ. Timisoara Ser. Mat. Inform., {\bf31}(2) (1993), 223--228.

 \bibitem{mond} 
 B. Mond and J. E. Pe\v cari\'c, {\it Operator convex functions of several variables}, Soochow J. Math., {\bf24} (1998), 239--254.
 
\bibitem{hamid_lama} 
H. R. Moradi, I. H. G\"um\"u\c s and Z. Heydarbeygi, {\it A glimpse at the operator Kantorovich inequality}, Linear Multilinear Algebra., {\bf67}(5) (2019), 1031--1036.

\bibitem{4}
R. Nakamoto and M. Nakamura, {\it Operator mean and Kantorovich inequality}, Math. Japon., {\bf44}(3) (1996), 495--498.

\bibitem{nak}
N. Nakamura, {\it Geometric operator mean induced from the Riccati equation}, Sci. Math.
Japon., {\bf66} (2007), 83--87.

\bibitem{sab_jfs} 
M. Sababheh, H. R. Moradi and S. Furuichi,  {\it Exponential inequalities for positive linear mappings}, J. Funct. Spaces., (2018), Article ID 5467413.

\bibitem{Uchiyama}
M. Uchiyama, {\it Operator monotone functions, positive definite kernels and majorization}, Proc. Amer. Math. Soc., {\bf138}(11) (2010),  3985--3996.

\bibitem{yamazaki}
T. Yamazaki, {\it Private communications}.
%\bibitem{MO2017}
%H. R. Moradi and M. E. Omidvar, {\it Complementary inequalities to improved AM-GM inequality}, Acta. Math. Sin.-English Ser., {\bf 33}(12) (2017), 1609--1616. 









\end{thebibliography}
\end{document}